\documentclass[letterpaper,12pt]{article}

\usepackage{amsmath}
\allowdisplaybreaks

\usepackage{enumitem}
\usepackage{microtype,booktabs}
\usepackage{amsmath,amssymb,amsfonts,amsthm,bm}
\usepackage{authblk}
\usepackage{caption}
\usepackage{subcaption}
\usepackage{mathrsfs,dsfont}
\usepackage{indentfirst}
\usepackage[pdftex]{color,graphicx}
\usepackage[pdftex,bookmarks,unicode,colorlinks,linkcolor=blue]{hyperref}
\usepackage{tikz}
\usepackage{makecell}
\usepackage{todonotes}
\usepackage{comment}
\usepackage{xcolor}
\definecolor{grey}{rgb}{0.5,0.5,0.5}
\usepackage{amsmath}

\newtheorem{Lemma}{Lemma}[section]

\title{\Large Metastable Transitions in  Dynamical Systems with both Time-varying Perturbations and Degenerate Noise}

\date{}
\author[$\dag$]{\small Hanru Zou}
\author[$\dag$]{\small Hongjun Gao}
\author[$\dag$]{{Pingyuan Wei}\thanks{Corresponding authors\par \textit{\;\;Email:} pwei@seu.edu.cn (Pingyuan Wei), yingchao1993@xjtu.edu.cn (Ying Chao)}}
\author[$\ddag$]{\small Ying Chao$^*$}
 \affil[$\dag$]{School of Mathematics, Southeast University, Nanjing 211189, China.}
 \affil[$\ddag$]{\footnotesize School of Mathematics and Statistics, Xi'an Jiaotong University, Xi'an 710049, China.}

\newtheorem{thm}{Theorem}[section]
\newtheorem{theorem}{Theorem}[section]

\theoremstyle{remark}

\newtheorem{remark}[thm]{Remark}
\theoremstyle{definition}

\begin{document}

\maketitle

\begin{abstract}
This paper investigates the persistence of maximum likelihood paths in degenerate stochastic differential systems and quantifies how small periodic perturbations modulate the metastable transition rate. Within the Freidlin--Wentzell large deviation framework, we reformulate the variational problem for MLPs as a Hamiltonian system via a partial Legendre transform. Under hyperbolicity and transversality conditions, we prove, using a geometric Melnikov method adapted to general time-dependent perturbations, that the corresponding heteroclinic connections persist for sufficiently small perturbations. For the periodic case, we derive a closed-form explicit expression for the rate change to first order in the forcing amplitude. Two illustrative examples are presented.
\end{abstract}


Maximum likelihood paths, Freidlin--Wentzell large deviations, degenerate stochastic systems, periodic perturbations, geometric Melnikov method.

\renewcommand{\theequation}{\thesection.\arabic{equation}}
\setcounter{equation}{0}

\section{Introduction}\label{sec:intro}
Metastable transitions are a fundamental mechanism in stochastic dynamical systems, describing rare transitions between long-lived stable states induced by small random fluctuations. In the deterministic setting, a trajectory typically remains trapped in the basin of attraction of a stable equilibrium or attractor. In the presence of noise, however, the system may escape from one basin and reach another metastable state over a sufficiently long time scale. Such transitions arise in a wide range of physical, geophysical, and biological models
\cite{Ragone2018,Dematteis2018,Chaudhury2012}. In this paper, we consider a general non-autonomously forced degenerate stochastic mechanical system, aiming to study the persistence of maximum likelihood paths (MLPs) and to quantify how periodic perturbations change the rate of metastable transitions. This idea is inspired, for example, by the investigation of periodically perturbed Markov jump processes in chemical and
epidemiological applications~\cite{Billings2018}, by stochastic resonance~\cite{Lucarini2019}, by the use of nongradient forcing, which can be interpreted as an irreversible component, just as time-dependent perturbations can be interpreted similarly, to change transition rates~\cite{Heymann2008}, and by many successes in controlling deterministic systems using
periodic perturbations~\cite{Surappa2018,Xie2019,Chen2019}.

More specifically, consider the following \textit{degenerate SDE}:
\begin{equation}\label{dSDE}
\begin{cases}
\displaystyle
dX_t = F_1(X_t, Y_t) dt, \\
\displaystyle
dY_t = \big[ F_2(X_t, Y_t) + \varepsilon g(t,X_t, Y_t) \big] dt + \sqrt{\mu} \, \sigma (X_t,Y_t) \, dB_t,
\end{cases}
\end{equation}
where $(X_t,Y_t)^{\top}\in\mathbb{R}^n\times\mathbb{R}^m$, $t\in[a,b]$, $\varepsilon, \mu > 0$ are parameters, $F_1: \mathbb{R}^{n}\times \mathbb{R}^{m} \to \mathbb{R}^n$ and $F_2: \mathbb{R}^{n}\times \mathbb{R}^{m} \to \mathbb{R}^m$ are drift functions, $g:[a,b]\times \mathbb{R}^{n}\times \mathbb{R}^{m}\to \mathbb{R}^{m}$ is a time-varying  perturbation, $\sigma:\mathbb{R}^{n}\times \mathbb{R}^{m}\to\mathbb{R}^{m\times m}$ is the noise coefficient matrix,  and $B_t$ is an $m$-dimensional standard Brownian motion.

In \eqref{dSDE}, we assume $0<\mu\ll\varepsilon$, where the two small parameters $\varepsilon$ and $\mu$ represent the strengths of the periodic forcing and the noise perturbation, respectively.  When $\varepsilon\ne0,\;\mu=0$ (in the deterministic version), there exist two stable orbits $z_a^\varepsilon(t)$ and $z_b^\varepsilon(t)$ separated by an unstable orbit $z_u^\varepsilon(t)$, bifurcating from $z_a,\;z_b$ and $z_u$, respectively, where $z_u$ lies on the separatrix between the basins of attraction of the two stable states~\cite{Tao2018}.  Here $z_i, i=a,b,u$, denotes an equilibrium of the unperturbed system; no additional dynamical classification is required. When $\mu\ne0$, metastable transitions from the deterministic stable periodic orbits toward the unstable orbit, which are impossible without noise, occur due to small noise perturbations. Furthermore, the time-dependent forcing $g(t,X,Y)$ is added to quantify such transitions in the regime of small noise.

To quantify the metastable transition under small noise, we rely on the Freidlin–Wentzell large deviation theory, which provides a principled asymptotic characterization of rare-event probabilities. The leading-order transition probability from one stable orbit to another is determined by the minimum action, and the corresponding minimizers are referred to as the maximum likelihood paths. Since the transition time is typically not prescribed in metastable regimes, it is natural to consider an additional optimization over the time horizon; in many settings the relevant minimizers emerge in the infinite-time limit, which motivates formulating the problem for MLPs on an unbounded time interval. This infinite-horizon viewpoint naturally links the MLPs to heteroclinic type solutions of the associated Euler–Lagrange (equivalently, Hamiltonian) system, providing a practical and principled route to compute transition rates and most probable transition mechanisms (as clarified in the next section).

Over the past two decades, continuous and significant mathematical and experimental efforts have been devoted to understanding transitions and escapes in periodically forced systems with small noise. Smelyanskiy et al.~\cite{Smelyanskiy1997} and Dykman et al.~\cite{Dykman1997} proved that escape probabilities can be significantly changed even by a comparatively weak force. Agudov and Spagnolo ~\cite{Agudov2001}, and Dubkov et al.~\cite{Dubkov2004} studied the effect of noise-enhanced stability of periodically driven metastable states. Chen et al.~\cite{Chen2019} identified an MLP
under periodic forcing for the case of finite noise. Jiang et al.~\cite{Jiang2022} computed numerically the MLP in the eutrophicated lake ecosystem under Gaussian white noise and periodic forcing.   Seminal results on the noise-activated escape rate of a second-order and under-damped dynamical system can be found in~\cite{Dykman2001,Smelyanskiy1997} for the case of a single particle under linear additive driving, i.e., $g(t,x,y)=g(t)$.  Chao and Tao~\cite{Chao2022} studied how periodic perturbations affect the rate of metastable
transitions in weakly noisy underdamped kinetic Langevin systems. Their framework
applies to multiparticle/higher-dimensional mechanical systems and allows nonlinear
periodic forcings. Recently,  Chao, Duan and Wei~\cite{wei2024} considered noise-induced metastable transitions in a
one-dimensional overdamped Langevin system under small periodic forcing, presented conditions under which the MLPs connecting periodic orbits survive, and obtained an $\omega$ -dependent rate change function. For non-gradient systems, Tao~\cite{Tao2018} studied the hyperbolic periodic orbits and small-noise-induced metastable transitions and Fleurantin et al.~\cite{Emmanuel2023} focused on the distinguished most probable escape paths crossing the periodic orbits without gradient structure.

To extend the above studies, our work is devoted to identifying the intrinsic structural features of the underlying deterministic system, present concrete conditions under which the MLPs connecting time-dependent orbits survive, and then obtain an $\omega$-dependent rate change function.

Unlike previous work, our setting is no longer restricted to specific structural assumptions and includes a broader range of dynamical structures, such as orthogonal systems and certain nonlinear systems.
Moreover, we consider degenerate SDEs  under locally weak
monotonicity conditions and Lyapunov conditions. In addition, in this paper, the persistence of MLPs for general higher-dimensional degenerate systems under small time-dependent forcing is treated explicitly using dynamical-systems techniques, such as geometric Melnikov methods. In the absence of time periodicity, the usual Poincaré-map formulation and the associated lobe-dynamics description are no longer available. Therefore, we further develop a geometric Melnikov method under general time-dependent perturbations. By fixing a time slice and parametrizing the unperturbed heteroclinic orbit, we measure the splitting the perturbed stable and unstable manifolds by projecting their displacement onto the conormal covector along the unperturbed orbit, leading to a Melnikov integral criterion for the persistence of heteroclinic orbits.

The main idea of this paper is the following: 

$\bullet$ First, we formulate the Freidlin--Wentzell framework for the
non-autonomous degenerate stochastic system, which provides the action
functional and the exponential characterization of metastable transition rates.

$\bullet$ Then we transform the Euler-Lagrange problem into a higher-dimensional perturbation Hamiltonian system, which allows us to characterize the perturbed MLPs using an extended geometric Melnikov method~\cite{Melnikov1963,Priyankara2022}. 

$\bullet$ Finally, we use a linear-theory calculation inspired by~\cite{Assaf2008,Dykman2001,Smelyanskiy1997} to approximate the minimum of the Freidlin–Wentzell action to first order in $\varepsilon$. Thus, we derive a closed-form explicit expression that characterizes how a small, generic, nonlinear periodic forcing affects the metastable transition rate in the general SDE \eqref{dSDE}.

The rest of the paper is arranged as follows. In section \ref{sec:main1}, we define the Freidlin–Wentzell large deviation principle for degenerate SDEs and consider the metastable transition based on large deviation principle. In section \ref{sec3}, we reformulate the variational problem as a Hamiltonian formalism.  And later we show the persistence of heteroclinic connections for the higher-dimensional Hamiltonian system under small perturbations in  section \ref{sec4}. Then we  utilize an equivalent description of Freidlin--Wentzell action and further characterize the transition rate variations under periodic perturbations in section \ref{sec5}. Finally, numerical experiments of degenerate SDEs are exhibited in section \ref{sec6} and conclusions follow in section \ref{sec7}.

\renewcommand{\theequation}{\thesection.\arabic{equation}}
\setcounter{equation}{0}
 
\section{Freidlin--Wentzell Framework for Degenerate Stochastic Systems}
\label{sec:main1}
In this section, we first state the basic assumptions of degenerate stochastic differential equations and then introduce the Freidlin--Wentzell large deviation theory. Finally, we use this framework to study metastable transitions between stable and unstable states.
\subsection{Degenerate Stochastic Differential Equations and Freidlin--Wentzell Large Deviation}
\label{sec:degenerate}

Let $z_t = (x_t, y_t)^\top \in \mathbb R^n\times\mathbb R^m$. The system \eqref{dSDE} can be written as:
\begin{equation}\label{dSDE2}
    dz_t = A^\varepsilon(t, z_t) dt + \sqrt{\mu} \, \tilde{\sigma}(z_t) \, dB_t,
\end{equation}
with drift and diffusion:
$$
A^\varepsilon(t,z) :=
\begin{pmatrix}
F_1(x,y) \\
F_2(x,y) + \varepsilon g(t,x,y)
\end{pmatrix}, \quad
\tilde{\sigma}(z) :=
\begin{pmatrix}
0 \\
\sigma(x,y)
\end{pmatrix}.
$$
Denote the covariance matrix by $\Sigma(z):=\sigma(z)\sigma(z)^\top\in\mathbb{R}^{m\times m}$. Hence, $\tilde\sigma\tilde\sigma^\top=\mathrm{diag}(0,\Sigma)$.

\noindent\textbf{Assumption \textbf{(A1)}}
\begin{enumerate}
    \item[($i$)] \textit{(Integrability condition on $g$)} For arbitrary $R>0$, 
    \begin{align}
        \int_a^b\sup_{|z|\leq R}\left|g(s,z)\right|ds<\infty.\notag
    \end{align}
    \item[($ii$)] \textit{(Local monotonicity conditions)} Let $\delta_0\in(0,1)$. For arbitrary $R>0$, if $|z_1|\vee|z_2|\leq R$, $|z_1-z_2|\leq \delta_0$, there exists $L_R>0$ and a nonnegative integrable function $\eta_ R(t)$, $t\in [a,b]$, such that 
    \begin{equation}
        2\left\langle z_1-z_2,
        \begin{pmatrix}
        F_1(z_1)-F_1(z_2) 
        \\F_2(z_1)-F_2(z_2)
        \end{pmatrix}
        \right\rangle+ \big\|{\sigma}(z_1)-{\sigma}(z_2)\big\|^2 \leq L_R |z_1-z_2|^2\notag
    \end{equation}
    and
    \begin{equation}
        2\left\langle z_1-z_2,
        g(s,z_1)-g(s,z_2)
        \right\rangle \leq \eta_R(s) |z_1-z_2|^2,\;\; s\in [a,b].\notag
    \end{equation}
    Here $\langle \cdot, \cdot \rangle$ denotes the Euclidean inner product. 
    \item[($iii$)] \textit{(Lyapunov conditions)} There exists a Lyapunov function $\mathcal U\in C^2(\mathbb{R}^{n}\times\mathbb R^m;\mathbb{R}_+)$ and $\theta>0$, $\vartheta>0$ such that 
    \begin{equation}
        \lim_{|z|\to +\infty} \mathcal U(z)=+\infty,\notag
    \end{equation}
    \begin{equation*}
        \left\langle 
        F(z), \nabla \mathcal U(z) \right\rangle+ \frac{\theta}{2} \text{Trace} \big({\sigma}(z)^\top \nabla_{yy}^2 \mathcal U(z){\sigma}(z) \big)+\frac{|{\sigma}(z)\cdot \nabla_y \mathcal U(z)|^2}{\vartheta \mathcal U(z)}\leq C(1+\mathcal U(z)),
     \end{equation*}
    \begin{equation}
        \left\langle g(s,z),\nabla \mathcal U(z)
         \right\rangle\leq \eta_\mathcal U(s)(1+\mathcal U(z)),\;\;s\in[a,b],\notag
    \end{equation}
    and
    \begin{equation}
        \text{Trace} \big({\sigma}(z)^\top \nabla_{yy}^2 \mathcal U(z){\sigma}(z) \big)\geq 0,\notag
    \end{equation} where $F(z)=\begin{pmatrix}
        F_1(z)
        \\F_2(z)
        \end{pmatrix}$.
    Here $C>0$ is a constant and $\eta_\mathcal U(t)$, $t\in [a,b]$, is a nonnegative integrable function.
    \item[($iv$)] \textit{(Uniform non-degeneracy)} The diffusion matrix $\Sigma=\sigma\sigma^\top$ is uniformly positive definite: there exists $\lambda_0>0$ such that
    $$
    v^\top\Sigma(z)v \ge \lambda_0 |v|^2,\qquad \forall z\in\mathbb R^n\times\mathbb R^m,\ \forall v\in\mathbb R^m.
    $$

\end{enumerate}

By~\cite[Proposition 2.1]{Wang2024}, under Assumption \textbf{(A1)}, there exists a unique strong solution of equation \eqref{dSDE}. Moreover, $\Sigma(z)$ is invertible for all $z$ due to its uniform positive definiteness. For fixed $\varepsilon>0$ and  any $\phi=(\phi_x,\phi_y)\in C([a,b],\mathbb{R}^{m+n})$, we define the \textit{(Freidlin--Wentzell) action functional} as
\begin{align}\label{FW-deg}
S^\varepsilon_{\text{FW}}[\phi] 
=& \frac{1}{2} \int_{a}^{b} \big\| \dot{\phi}_y(t) - F_2(\phi_x,\phi_y) - \varepsilon g(t,\phi_x,\phi_y) \big\|^2_{\Sigma(\phi)^{-1}} dt,
\end{align}
 where the weighted norm is defined as $\|v\|^2_M = v^\top M v$ for any vector $v$ and positive definite matrix $M$. Then we further have the following large deviation principle based on~\cite[Theorem 2.1]{Wang2024}: 
\begin{Lemma}[Freidlin–Wentzell large deviation principle]
    For $\mu>0$, let $Z^\mu$ be the solution to equation \eqref{dSDE2}. If Assumption \textbf{(A1)} holds, then the family $\{Z^\mu\}_{\mu>0}$ satisfies a large deviation principle on the space $C([a,b],\mathbb{R}^{m+n})$ with the rate function $\mathcal{I}^\varepsilon_{\text{FW}}: C([a,b],\mathbb{R}^{m+n})\to [0,\infty]$, where  
    \begin{equation}
        \mathcal{I}^\varepsilon_{\text{FW}}=\inf_{\phi\in C([a,b],\mathbb{R}^{m+n});\; {\dot{\phi}_x(t) = F_1(\phi_x(t),\phi_y(t))}}S^\varepsilon_{\text{FW}}[\phi],
    \end{equation}
    with the convention $\inf\emptyset=\infty$ and the action functional $S^\varepsilon_{\text{FW}}$ given in \eqref{FW-deg}.
\end{Lemma}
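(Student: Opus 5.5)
The plan is to derive the statement from the general large deviation result \cite[Theorem 2.1]{Wang2024} for SDEs with locally monotone, time-dependent coefficients, which under Assumption \textbf{(A1)} gives an LDP for \eqref{dSDE2} with the controlled-ODE rate function
\[
I(\phi)=\inf\Big\{\tfrac12\int_a^b|h(t)|^2\,dt:\ h\in L^2([a,b],\mathbb R^m),\ \phi=\mathcal G(h)\Big\},
\]
where $\mathcal G(h)$ is the solution of the skeleton equation $\dot\phi=A^\varepsilon(t,\phi)+\tilde\sigma(\phi)h$ with the fixed initial point. The first step is to check that our hypotheses match theirs. Items $(i)$--$(iii)$ of \textbf{(A1)} are exactly the integrability, local monotonicity and Lyapunov conditions used there, with the degenerate block structure of $\tilde\sigma$ allowed because only $\sigma$ enters the trace and gradient terms. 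I will also confirm that the skeleton equation has a unique solution for each $h\in L^2$: local monotonicity gives uniqueness by a Gronwall estimate, and the Lyapunov bound together with Cauchy--Schwarz on the $h$-term gives non-explosion.

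The second step is to identify $I$ with the constrained action in the statement. Writing the skeleton equation componentwise gives $\dot\phi_x=F_1(\phi_x,\phi_y)$ and $\dot\phi_y=F_2+\varepsilon g+\sigma(\phi)h$.

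\begin{itemize}
\item If $\phi$ violates the $x$-constraint, or $\phi_y$ is not absolutely continuous, then no control $h$ produces $\phi$, so $I(\phi)=\inf\emptyset=\infty$.
\item Otherwise, by \textbf{(A1)}$(iv)$, $\sigma(\phi)$ is invertible (since $\Sigma=\sigma\sigma^\top$ is positive definite and $\sigma$ is square). Hence $h$ is uniquely determined as $h=\sigma(\phi)^{-1}(\dot\phi_y-F_2-\varepsilon g)$.
\item Then $|h|^2=\|\dot\phi_y-F_2-\varepsilon g\|^2_{(\sigma\sigma^\top)^{-1}}=\|\cdot\|^2_{\Sigma^{-1}}$, which gives exactly $S^\varepsilon_{\rm FW}[\phi]$ in \eqref{FW-deg}.
\end{itemize}

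The infimum over $h$ is therefore trivial, and $I$ equals the stated $\mathcal I^\varepsilon_{\rm FW}$: it is $S^\varepsilon_{\rm FW}$ on paths satisfying $\dot\phi_x=F_1$ and $+\infty$ elsewhere. I read the stated infimum in this sense, since it is constrained to a single path.

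I expect the main obstacle to be the hypothesis matching in the first step rather than the identification. The result in \cite{Wang2024} is presumably formulated for a full diffusion coefficient, so I must make sure degeneracy in the $x$-direction is permitted. If it is not directly covered, I would fall back on the weak convergence (Budhiraja--Dupuis) method. This means proving two things:
\begin{itemize}
\item compactness of the level sets $\{\mathcal G(h):\|h\|_{L^2}\le N\}$, using the Lyapunov estimate and equicontinuity;
\item convergence of the controlled processes $Z^{\mu,h_\mu}$ to $\mathcal G(h)$ in distribution whenever $h_\mu\to h$ weakly, via a stopping-time localization at $|z|\le R$ plus the local monotonicity bound.
\end{itemize}
Neither argument uses nondegeneracy of $\tilde\sigma$, only $(i)$--$(iii)$.
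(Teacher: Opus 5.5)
Your proposal is correct and takes essentially the same route as the paper, which gives no argument beyond invoking \cite[Theorem 2.1]{Wang2024} under \textbf{(A1)}; your explicit identification of the control-form rate function with $S^\varepsilon_{\text{FW}}$, via $h=\sigma(\phi)^{-1}(\dot\phi_y-F_2-\varepsilon g)$ and $|h|^2=\|\dot\phi_y-F_2-\varepsilon g\|^2_{\Sigma^{-1}}$, supplies the step the paper leaves implicit. One point to keep explicit: paths with $\phi(a)$ different from the fixed initial point must also get rate $+\infty$. Your skeleton equation includes this condition, but your summary sentence (and the paper's statement) drops it.
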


\subsection{Metastable Transitions Based on Freidlin–Wentzell Large Deviation Principle}

In this subsection, we aim to investigate metastable transitions for the degenerate system \eqref{dSDE}, with particular emphasis on how time-varying perturbations $g(t,x,y)$ and the degenerate noise structure influence both the transition pathways and rates between metastable states. Our analysis leverages the geometric reformulation of the Freidlin–Wentzell action principle to characterize the maximum likelihood transition mechanisms in this degenerate setting.

\noindent\textbf{Assumption \textbf{(A2)}}
\begin{itemize}
\item[($i$)] \textit{(Parameter hierarchy)} The parameters satisfy the asymptotic regime: $0 < \mu < \varepsilon \ll 1$. 

\item[($ii$)] \textit{(Metastable structure)} In the unperturbed deterministic case ($\mu = 0$, $\varepsilon = 0$), the system $\dot{z} = F(z)$ possesses three hyperbolic equilibrium points: two stable equilibria $z_a$ and $z_b$, and a saddle (unstable) equilibrium $z_u$ lying on the separatrix between their basins of attraction.
\end{itemize}

This parameter hierarchy ensures that we can first establish a large deviation principle (as $\mu \to 0$), then perform asymptotic analysis with respect to $\varepsilon$. In the presence of noise, the stable equilibria become \textit{metastable states}, enabling transitions between them. However, the time-dependent nature of the perturbation $g(t,x,y)$ introduces additional challenges compared to the autonomous case, as the metastable states and transition mechanisms become time-dependent.

\begin{Lemma}[Persistence under time-dependent forcing]
\label{Lemma:persistence}
Under Assumption \textbf{(A2)}, for sufficiently small $\varepsilon > 0$, the deterministic system ($\mu=0$)
\begin{equation*}\label{deterministic}
\dot{z} = A^\varepsilon(t,z) = F(z) + \varepsilon \begin{pmatrix} 0 \\ g(t,z) \end{pmatrix},
\end{equation*}
exhibits two stable orbits $z_a^\varepsilon(t)$ and $z_b^\varepsilon(t)$ and an unstable orbit $z_u^\varepsilon(t)$ on the time interval $[a,b]$, which are continuous in $\varepsilon$ and, as $\varepsilon \to 0$, converge uniformly to $z_a$, $z_b$, and $z_u$ respectively. Moreover, the stability of $z_a^\varepsilon(t)$ and $z_b^\varepsilon(t)$ and the instability of $z_u^\varepsilon(t)$ are preserved for small $\varepsilon$.

In the special case where $g(t,z)$ is $\tau$-periodic in $t$, these orbits become $\tau$-periodic.
\end{Lemma}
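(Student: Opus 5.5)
The plan is to perturb off each hyperbolic equilibrium using the exponential dichotomy of the linearized flow, and to obtain the orbit as the unique bounded solution of a fixed-point problem. Fix $i\in\{a,b,u\}$ and write $z=z_i+w$. Then
\begin{equation*}
\dot w = Lw + N(w) + \varepsilon G(t,w),
\end{equation*}
where $L=DF(z_i)$, $N(w)=F(z_i+w)-F(z_i)-Lw=O(|w|^2)$, and $G(t,w)=(0,g(t,z_i+w))^\top$. We assume $F\in C^2$ near $z_i$, and that $g$ is continuous in $t$ and locally Lipschitz in $z$, uniformly in $t$. These are slight strengthenings of \textbf{(A1)} that the statement implicitly needs for a bounded-orbit argument. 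Because $z_i$ is hyperbolic by \textbf{(A2)}(ii), $L$ has no spectrum on the imaginary axis. Hence the equation $\dot w=Lw+h(t)$ has, for every bounded continuous $h$ on $\mathbb R$ (we extend $g$ to all of $\mathbb R$ if it is only given on $[a,b]$, e.g.\ by a bounded extension), a unique bounded solution
\begin{equation*}
(\mathcal K h)(t)=\int_{-\infty}^{t} e^{L(t-s)}P_s h(s)\,ds-\int_t^{\infty}e^{L(t-s)}P_u h(s)\,ds .
\end{equation*}
Here $P_s$ and $P_u$ are the spectral projections, and $\|\mathcal K h\|_\infty\le K\|h\|_\infty$.

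The orbit $z_i^\varepsilon=z_i+w^\varepsilon$ is then a fixed point of $\Phi(w)=\mathcal K\big(N(w)+\varepsilon G(\cdot,w)\big)$ on the ball $B_r=\{w\in C_b(\mathbb R):\|w\|_\infty\le r\}$. Choose $r$ small, so that the Lipschitz constant of $N$ on $B_r$ satisfies $\mathrm{Lip}(N|_{B_r})\le Cr<1/(4K)$. Then choose $\varepsilon\le \varepsilon_0$ with $\varepsilon K\sup_{t,|w|\le r}|G|\le r/2$ and $\varepsilon K\,\mathrm{Lip}(G)<1/4$. With these choices $\Phi$ maps $B_r$ into itself and is a contraction. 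Banach's theorem gives a unique fixed point $w^\varepsilon$ with $\|w^\varepsilon\|_\infty\le 2K\varepsilon\sup|G|=O(\varepsilon)$, which is the uniform convergence $z_i^\varepsilon\to z_i$. Continuity in $\varepsilon$ follows from the uniform contraction principle, since $\Phi$ depends continuously (in fact affinely) on $\varepsilon$.

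\emph{Periodic case.} If $g$ is $\tau$-periodic, then $\Phi$ commutes with the shift $w\mapsto w(\cdot+\tau)$. By uniqueness of the fixed point in $B_r$, $w^\varepsilon(t+\tau)=w^\varepsilon(t)$.

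\emph{Stability.} The variational equation along $z_i^\varepsilon$ is
\begin{equation*}
\dot v=\big(L+E_\varepsilon(t)\big)v,\qquad E_\varepsilon(t)=DF(z_i+w^\varepsilon(t))-L+\varepsilon D_zG,
\end{equation*}
so that $\|E_\varepsilon\|_\infty=O(r)+O(\varepsilon)$, which is small. By roughness of exponential dichotomies (Coppel's perturbation theorem), this equation keeps a dichotomy with the same stable and unstable dimensions as $L$. For $i=a,b$ every direction stays exponentially contracting, so $z_a^\varepsilon$ and $z_b^\varepsilon$ are uniformly asymptotically stable. For $z_u$ the unstable dimension stays positive, so $z_u^\varepsilon$ remains of saddle type. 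In the periodic case one can instead argue that the Floquet multipliers are continuous in $\varepsilon$ and stay off the unit circle.

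\emph{Main obstacle.} The delicate step is the mismatch between the statement's finite interval $[a,b]$ and the merely integrable forcing allowed in \textbf{(A1)}. On a finite interval every initial condition near $z_i$ gives a solution that stays close, so "the" orbit is not canonically defined there. I would handle this by working on $\mathbb R$ after a bounded extension of $g$, and by defining $z_i^\varepsilon$ as the unique bounded solution. If $g$ is only integrable in $t$, the contraction estimates above still go through with $\sup|G|$ replaced by the sliding bound $\sup_t\int_t^{t+1}|G|$, using the exponential decay of the kernel of $\mathcal K$. The same replacement works for the Lipschitz bounds, which then involve $\eta_R$.
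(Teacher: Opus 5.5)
Your proposal is correct and follows the same route as the paper, which only sketches it (implicit function theorem for existence, Lyapunov exponents or Floquet multipliers for stability). Your contraction on $C_b(\mathbb R)$ through the hyperbolic Green's operator is exactly the implicit function theorem posed in the right space of bounded solutions on $\mathbb R$, which also settles the non-uniqueness on $[a,b]$ you point out. Roughness of exponential dichotomies is the rigorous form of the paper's claim that the stability data are continuous perturbations of the eigenvalues of $DF(z_i)$.

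One caveat concerns your closing remark that the Lipschitz estimates can be run with $\eta_R$. This does not work, because \textbf{(A1)}($ii$) only gives a one-sided monotonicity bound on $g$. So the two-sided Lipschitz hypothesis you impose is genuinely needed, and in fact $C^1$ in $z$ is needed, since your stability step uses $D_zG$.
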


The proof of Lemma~\ref{Lemma:persistence} follows from standard arguments in dynamical systems theory. The existence of the perturbed orbits is guaranteed by the implicit function theorem. The stability properties can be characterized via Lyapunov exponents (in the general time-dependent case) or Floquet multipliers (in the periodic case)~\cite{Meiss2007}, which constitute continuous perturbations of the eigenvalues of $DF(z)$ at the original equilibria.

Note that in the small noise regime, metastable transitions typically occur over an exponentially long time scale, which effectively requires the time horizon $b - a \to \infty$. It is therefore natural to consider the limiting case where the initial time $a = -\infty$ and the terminal time $b = +\infty$. To study transitions on the whole time axis we pass to $t\in\mathbb R$ and extend the action by
\begin{equation*}
    S^\varepsilon_{\mathrm{FW}}[\phi]=\frac12\int_{-\infty}^{+\infty}
  \big\|\,\dot\phi_y(t)-F_2(\phi(t))-\varepsilon g(t,\phi(t))\,\big\|^2_{\Sigma(\phi(t))^{-1}}\,dt,
\end{equation*}
whenever the integral is finite, and set $S^\varepsilon_{\mathrm{FW}}[\phi]=+\infty$ otherwise. Let $z_a^\varepsilon(t)$ and $z_b^\varepsilon(t)$ denote the perturbed (meta)stable orbits as introduced in Lemma~\ref{Lemma:persistence}. We then define the admissible path space by
\begin{align}
  \mathscr A
  :=\Big\{\,\phi=(\phi_x,\phi_y)^{\top}\in AC_{\mathrm{loc}}(\mathbb R;\mathbb R^n\times\mathbb R^m):\ 
  \dot\phi_x(t)=F_1(\phi_x(t),\phi_y(t))\ \text{for a.e. }t,&\notag\\
  \lim_{t\to -\infty}(\phi(t)-z_a^\varepsilon(t))=0,\;
   \lim_{t\to +\infty}(\phi(t)-z_b^\varepsilon(t))=0,\;S^\varepsilon_{\mathrm{FW}}[\phi]<\infty&
  \Big\},\notag
\end{align}
where $AC_{\mathrm{loc}}(\mathbb R;\mathbb R^n\times\mathbb R^m)$ denotes the space of all locally absolutely continuous functions, which is equivalent to the local Sobolev space $W^{1,1}_{\mathrm{loc}}(\mathbb{R};\mathbb R^n\times\mathbb R^m)$.

Based on the Freidlin–Wentzell large deviation principle, to analyze transitions between the metastable orbits $z_a^\varepsilon(t)$ and $z_b^\varepsilon(t)$, the central idea is to identify a deterministic path: commonly referred to as the \textit{maximum likelihood path} or \textit{instanton}—around which the diffusion process is highly concentrated. More precisely, one aims to characterize the probability of such rare transitions through the asymptotic relation
\begin{equation}
    \mathbb{P}\left( \sup_{t \in (-\infty,+\infty)} \left| Z_t^{\mu} - \phi(t) \right| < \delta \right) \asymp \exp\left( -\frac{1}{\mu} S_{FW}^\varepsilon[\phi] \right), \quad\mu \to 0,
\end{equation}
for any $\phi \in \mathscr{A}$, where $\delta > 0$ denotes the radius of a small neighborhood around the deterministic reference path $\phi(t)$ within which the stochastic trajectory $Z_t^{\mu}$ remains localized. The minimum action path $\phi^\ast$ is then defined by
\begin{align}\label{MLP}
S^\varepsilon_{\text{FW}}[\phi^\ast] = \inf_{\phi \in \mathscr{A}} S^\varepsilon_{\text{FW}}[\phi],
\end{align} and represents the maximum likelihood path in the small-noise limit $\mu \to 0$.

For convenience, we denote by $\mathcal R_\varepsilon$ the transition rate from $z_a^\varepsilon(t)$ to $z_b^\varepsilon(t)$. The time-dependent Freidlin--Wentzell theory developed above, together with existing physical studies and numerical evidence~\cite{ Dykman1997,Dykman2001,Chen2016,Chao2022}, suggests that, as $\mu\to 0$,
\begin{equation*}
    \mathcal R_\varepsilon
    \asymp \exp\!\left(-\frac{1}{\mu}S_{FW}^\varepsilon[\phi^\ast]\right).
\end{equation*}


\section{Variational and Hamiltonian Formulations}\label{sec3}

In this section, we connect the concept of the MLPs with heteroclinic connections in phase space by employing the variational principle and Legendre transform. This then allows us to address the problem using techniques from Hamiltonian perturbation analysis.

\subsection{Constrained Variational Principle and Euler--Lagrange Equations}

To better understand the minimizer $\phi^\ast(t)$ (i.e., the MLP) of the Freidlin--Wentzell action functional $S^\varepsilon_{\text{FW}}$, we start with the corresponding variational formulation. Treating the $x$--dynamics as a constraint, we introduce a time-dependent Lagrange multiplier (adjoint) $\lambda(t)\in\mathbb R^n$ and define the augmented, time-dependent Lagrangian by
\begin{align}\label{L}
\mathcal L^\varepsilon(\phi, \dot{\phi}, \lambda, t)
= \frac12 \big(r^\varepsilon(t)\big)^{\top} \Sigma(\phi(t))^{-1} r^\varepsilon(t)
+ \lambda(t)^{\top} \big( \dot{\phi}_x(t) - F_1(\phi(t))\big),
\end{align}
where $r^\varepsilon(t):=\dot\phi_y(t)-F_2(\phi(t))-\varepsilon g(t,\phi(t)).$ Accordingly, the augmented action functional takes the form
$$
\mathcal J^\varepsilon[\phi,\lambda]
=\int_{-\infty}^{\infty}\mathcal L^\varepsilon(\phi,\dot\phi,\lambda,t)\,dt
= S^\varepsilon_{\text{FW}}[\phi]+\int_{-\infty}^{\infty}\lambda^{\top}(t)\big(\dot{\phi}_x(t)-F_1(\phi(t))\big)\,dt.
$$

The stationary conditions of $\mathcal J^\varepsilon$ are determined by the Euler--Lagrange equations with respect to $\phi=(\phi_x,\phi_y)$, where $\lambda$ is treated as an additional unknown satisfying its own variation equation. In the standard variational form,
\begin{equation}\label{EL}
    \frac{\delta\mathcal J^\varepsilon}{\delta\phi}
=\frac{\partial\mathcal L^\varepsilon}{\partial\phi}
-\frac{d}{dt}\frac{\partial\mathcal L^\varepsilon}{\partial\dot\phi}
=0,
\end{equation}
we obtain the componentwise system:
\begin{equation*}\label{ELL}
\begin{cases}
\displaystyle
\dot\phi_x = F_1(\phi),\\[0.4em]
\dot\phi_y = F_2(\phi) + \varepsilon g(t,\phi) + r^\varepsilon,\\[0.4em]
\displaystyle
\dot\lambda
= -\big(\partial_{\phi_x}F_2+\varepsilon\,\partial_{\phi_x}g\big)^{\top}\Sigma^{-1}r^\varepsilon
+\tfrac12\,(r^\varepsilon)^{\top}\big(D_{\phi_x}\Sigma^{-1}\big)r^\varepsilon
-(\partial_{\phi_x}F_1)^{\top}\lambda,\\[0.6em]
\displaystyle
\frac{d}{dt}\!\big(\Sigma^{-1}(\phi)\,r^\varepsilon\big)
= -\big(\partial_{\phi_y}F_2+\varepsilon\,\partial_{\phi_y}g\big)^{\top}\Sigma^{-1}r^\varepsilon
+\tfrac12\,(r^\varepsilon)^{\top}\big(D_{\phi_y}\Sigma^{-1}\big)r^\varepsilon
-(\partial_{\phi_y}F_1)^{\top}\lambda.
\end{cases}
\end{equation*}
Here $D_{\phi_x}\Sigma^{-1}$ and $D_{\phi_y}\Sigma^{-1}$ denote the derivatives of $\Sigma^{-1}(\phi)$ with respect to $\phi_x$ and $\phi_y$, respectively, and contractions such as $r^{\top} (D_{\phi_x}\Sigma^{-1}(\phi)) r$ are understood in the natural index sense. The above system represents the Euler--Lagrange equations for the constrained variational problem associated with $S_{\text{FW}}^{\varepsilon}$.

The minimizer $\phi^\ast(t)$ satisfies the (heteroclinic-type) boundary conditions
$$
\lim_{t\to -\infty}\big(\phi^*(t)-z_a^\varepsilon(t)\big)=0,\qquad
\lim_{t\to +\infty}\big(\phi^*(t)-z_b^\varepsilon(t)\big)=0,
$$
together with the transversality condition
$$
\lim_{t\to\pm\infty}\lambda(t)=0,
$$
which guarantees the vanishing of the boundary terms arising from the integration by parts in the variational derivation. We remark that, although each MLP satisfies the Euler–Lagrange equations, the solution is generally not unique.
Such nonuniqueness arises from time translation invariance and the possible coexistence of multiple transition channels between metastable states.

\subsection{Hamiltonian Structure via Partial Legendre Transform}

We now convert the Lagrangian problem to the Hamiltonian framework by introducing the generalized momenta
\begin{align}\label{L_psi}
    \psi_x := \frac{\partial \mathcal L^\varepsilon}{\partial \dot{\phi}_x}, \qquad
    \psi_y := \frac{\partial \mathcal L^\varepsilon}{\partial \dot{\phi}_y}.
\end{align}
Since $\mathcal L$ is linear in $\dot{\phi}_x$, we have $\psi_x = \lambda(t)$ and $\dot{\phi}_x$ cannot be expressed as a function of $\psi_x$. In contrast, the $y$--block satisfies the Legendre nondegeneracy condition, $\partial^2 \mathcal L^\varepsilon / \partial \dot{\phi}_y^2 = \Sigma^{-1}(\phi)$, which is invertible by Assumption \textbf{(A1)}(iv). Therefore, $\dot{\phi}_y$ can be locally expressed in terms of $(t, \phi_x, \phi_y, \psi_y)$ as
\begin{align*}
    \dot{\phi}_y = \Sigma \psi_y + F_2(\phi) + \varepsilon g(t,\phi).
\end{align*}  
Following~\cite{Arnold1989}, the Hamiltonian is then defined via the (partial) Legendre transform in the $y$--variables:
\begin{align}\label{Hf}
\begin{split}
   H_\varepsilon(t,\phi_x,\phi_y,\psi_x,\psi_y)
       &= \psi_x^{\top} \dot{\phi}_x + \psi_y^{\top} \dot{\phi}_y - \mathcal L^\varepsilon(\phi, \dot{\phi}, \lambda, t) \\
       &= \frac{1}{2} \psi_y^\top \Sigma \psi_y + \psi_x^\top F_1(\phi) + \psi_y^\top F_2(\phi) + \varepsilon \psi_y^\top g(t,\phi) \\
       &=: H_0(\phi_x,\phi_y,\psi_x,\psi_y) + \varepsilon H_1(t,\phi_x,\phi_y,\psi_x,\psi_y).
\end{split}
\end{align}
Treating $\psi_x$ and $\psi_y$ as independent variables, and denoting
$\Phi=(\phi,\psi)^{\top}=(\phi_x,\phi_y,\psi_x,\psi_y)^{\top}$ for convenience, Hamilton's equations
take the canonical form
\begin{equation}\label{Ham-Eq}
   \dot{\Phi}=J\nabla_\Phi H_\varepsilon(t,\Phi)
   = \mathcal F(\Phi)+\varepsilon \mathcal G(t,\Phi),
\end{equation}
where $J=
\begin{pmatrix}
O_{n+m} & I_{n+m}\\
-I_{n+m} & O_{n+m}
\end{pmatrix}$ is the Poisson matrix, and
\begin{align}\label{def of F}
    \mathcal F(\phi,\psi):=
\begin{pmatrix}
F_1(\phi)\\[1mm]
F_2(\phi)+ \Sigma(\phi)\psi_y\\[1mm]
-(\partial_{\phi_x} F_1)^{\top} \psi_x
-(\partial_{\phi_x} F_2)^{\top}\psi_y
-\frac12\,\psi_y^\top\big(D_{\phi_x}\Sigma\big)\psi_y\\[1mm]
-(\partial_{\phi_y} F_1)^{\top} \psi_x
-(\partial_{\phi_y} F_2)^{\top}\psi_y
-\frac12\,\psi_y^\top\big(D_{\phi_y}\Sigma\big)\psi_y
\end{pmatrix},
\end{align}
\begin{align}\label{def of G}
\mathcal G(t,\phi,\psi)
:=
\begin{pmatrix}
0\\[1mm]
g(t,\phi)\\[1mm]
-(\partial_{\phi_x} g)^{\top} \psi_y\\[1mm]
-(\partial_{\phi_y} g)^{\top} \psi_y
\end{pmatrix}.
\end{align}

We note that the matrix-derivative identities
\begin{align*}
    (\Sigma \psi_y)^{\top} (D_\phi \Sigma^{-1}) (\Sigma \psi_y) = - \psi_y^{\top} (D_\phi \Sigma) \psi_y,
\end{align*}
follow from differentiating $\Sigma \Sigma^{-1} = I$ and are used to convert terms involving $D_\phi \Sigma^{-1}$ into expressions in $D_\phi \Sigma$. 

Consequently, every MLP $\phi^\ast$ corresponds to a solution $\Phi^*(t)=(\phi^*(t),\psi^*(t))$ of the Hamiltonian system above satisfying the (heteroclinic) boundary conditions:
\begin{align*}
    \begin{cases}
    \lim_{t\to -\infty}\big(\phi^*(t)-z_a^\varepsilon(t)\big)=0,&
\lim_{t\to +\infty}\big(\phi^*(t)-z_b^\varepsilon(t)\big)=0,\\
    \lim_{t\to -\infty} \psi(t)^*= 0, & \lim_{t\to +\infty} \psi^*(t)= 0.
    \end{cases}
\end{align*}

Therefore, the study of the MLP for \eqref{dSDE} is
reduced to the study of heteroclinic connections of the perturbed Hamiltonian
system \eqref{Ham-Eq} in phase space. Indeed, any MLP $\phi^*$ can be lifted to a
Hamiltonian trajectory $\Phi^*(t)=(\phi^*(t),\psi^*(t))$ satisfying the
heteroclinic boundary conditions, while the MLP itself is recovered as the
configuration component of this phase-space connection.



\section{Persistence of Perturbed Heteroclinic Connections}\label{sec4}

In this section, we study the persistence of the uphill heteroclinic connection for the perturbed Hamiltonian system. We first introduce the required structural assumptions and then prove Theorem \ref{thm:persistence-uphill} by geometric Melnikov methods.

\noindent\textbf{Assumption \textbf{(A3)}}
\begin{itemize}
\item[($i$)] \textit{(Regularity and boundedness of the coefficients)} Let \(U\) be a
neighborhood of
 $\Phi_{\mathrm{up}}^0(\mathbb R)\cup\{\Gamma_a,\Gamma_u\}$ in the phase space, and let \(U_\phi\) be its projection onto the configuration space. Assume that $$\Sigma\in C_b^3(U_\phi),\;\mathcal F\in C_b^2(U),\;\mathcal G\in C_b^2(\mathbb R\times U),$$
where $\Sigma(\phi)=\sigma(\phi)\sigma(\phi)^\top$ and $\mathcal F, \mathcal G$ are defined in \eqref{def of F}, \eqref{def of G}, respectively.
 \item[($ii$)] \textit{(Existence of an unperturbed uphill heteroclinic orbit)}
For $\varepsilon=0$, let
\begin{equation}\label{unperturbed-HS}
    \dot\Phi=\mathcal F(\Phi),\qquad \Phi=(\phi,\psi)^{\top},
\end{equation}
be the unperturbed Hamiltonian system associated with \eqref{Ham-Eq}.
We assume that it admits an ``uphill'' heteroclinic orbit
$$
    \Phi_{\mathrm{up}}^0(t)
    =
    \bigl(\phi^*(t),\psi^*(t)\bigr)^{\top},
    \qquad t\in\mathbb R,
$$
with phase-space limits
$$
    \lim_{t\to-\infty}\Phi_{\mathrm{up}}^0(t)=(z_a,0,0)^{\top},
    \qquad
    \lim_{t\to+\infty}\Phi_{\mathrm{up}}^0(t)=(z_u,0,0)^{\top}.
$$
Note that the word ``uphill'' here refers only to the configuration component of this
phase-space orbit, namely
$$
    \lim_{t\to-\infty}\phi^*(t)=z_a,
    \qquad
    \lim_{t\to+\infty}\phi^*(t)=z_u,
$$
with $z_a$ the initial stable state and $z_u$ the intermediate
saddle state.

\item[($iii$)] \textit{(Hyperbolicity and transversality)}
Assume that 
$$
\Gamma_a:=(z_a,0,0)^{\top} \;\;\text{and}\;\; \Gamma_u:=(z_u,0,0)^{\top},
$$
are hyperbolic equilibria of the
    unperturbed Hamiltonian system \eqref{unperturbed-HS}. Furthermore, the unperturbed uphill heteroclinic orbit
    $\Phi_{\mathrm{up}}^0$ is transverse in the zero-energy manifold
    $$
        \mathcal E_0:=\{\Phi:H_0(\Phi)=0\},
    $$
    where $H_0$ is the unperturbed Hamiltonian defined in \eqref{Hf}. More precisely, this means that,
    $$
        T_{\Phi_{\mathrm{up}}^0(0)}W^u(\Gamma_a;\mathcal F)
        +
        T_{\Phi_{\mathrm{up}}^0(0)}W^s(\Gamma_u;\mathcal F)
        =
        T_{\Phi_{\mathrm{up}}^0(0)}\mathcal E_0.
    $$
\item[($iv$)] \textit{(Simple zero of the Melnikov function)} 
Assume that there exists \(\alpha_0\in\mathbb R\) such that
\begin{equation*}
    M_{\mathrm{up}}(\alpha_0)=0,
    \qquad
    M_{\mathrm{up}}'(\alpha_0)\neq0,
\end{equation*}
where the Melnikov
function is defined by
\begin{equation}\label{Mup}
    M_{\mathrm{up}}(\alpha)
    :=
    \int_{-\infty}^{+\infty}
    \left\langle
    \nabla_\Phi H_0(\Phi_{\mathrm{up}}^0(s)),
    \mathcal G(s+\alpha,\Phi_{\mathrm{up}}^0(s))
    \right\rangle ds.
\end{equation}

\end{itemize}
\begin{remark}
Assumption~\textbf{(A3)}($i$)  ensures the invariant manifolds and their perturbation expansions are well defined.
Assumption~\textbf{(A3)}($ii$) supplies the reference orbit for the persistence
analysis. In many applications, such an unperturbed uphill heteroclinic
connection is known to exist. In particular, for gradient systems, and more
generally for Langevin-type systems satisfying a suitable
fluctuation--dissipation structure, the uphill transition path is related to
the time reversal of a deterministic relaxation trajectory \cite{Souza2019,Chao2022,Chao2026}. We impose the existence assumption only on the uphill connection. The reason is
that the downhill part is typically governed by deterministic relaxation: once
the system reaches a neighborhood of the intermediate saddle state $z_u$, it
is then carried by the deterministic flow toward the stable state $z_b$,
without requiring additional noise. In the Hamiltonian formulation, this
corresponds to the zero-momentum branch $\psi\equiv0$. Indeed, on this branch
the Hamiltonian system reduces to 
$
    \dot\phi_x=F_1(\phi),\;
    \dot\phi_y=F_2(\phi),
$ 
which is precisely the deterministic relaxation dynamics. Along such a
deterministic relaxation path, the Freidlin--Wentzell action vanishes. In
section~\ref{sec5}, we shall further show that, for the perturbed system, the action
along the perturbed downhill connection has no linear contribution in
$\varepsilon$.
\end{remark}
\begin{remark}
Assumption~\textbf{(A3)}($iii$) is the key geometric assumption.
The hyperbolicity of $\Gamma_a$ and $\Gamma_u$ guarantees the existence and persistence of their stable and unstable manifolds. The transversality condition in the zero-energy manifold $\mathcal E_0$ excludes degenerate tangencies between $W^u(\Gamma_a;\mathcal F)$ and $W^s(\Gamma_u;\mathcal F)$. Consequently, the splitting of the perturbed manifolds can be reduced to a scalar displacement in the conormal direction to $\mathcal E_0$, and its first-order term is precisely the Melnikov function \eqref{Mup}. In a general Hamiltonian phase space, the
relative splitting of the stable and unstable manifolds may be higher-dimensional,
and the corresponding Melnikov object is typically vector-valued. Melnikov functions for heteroclinic manifolds have been widely studied in the periodic case \cite{Yagasaki2025}, and have been extended to aperiodically
time-dependent settings \cite{Wiggins1998} and more general time-dependent
heteroclinic settings \cite{Priyankara2022}. In addition, see \cite{GideaLlave2018} for global Melnikov theory on
homoclinic manifolds with general time-dependent perturbations. Finally, Assumption~\textbf{(A3)}($iv$) allows one to use the implicit function theorem for the first-order splitting equation and to choose a suitable phase shift $\alpha_\varepsilon$ for which the perturbed stable and unstable manifolds intersect.
\end{remark}

\begin{theorem}[Persistence of the perturbed uphill heteroclinic connection]
\label{thm:persistence-uphill}
Assume that Assumptions~\textbf{(A1)-(A3)} hold, then there exists \(\varepsilon_0>0\) such
that, for every \(|\varepsilon|<\varepsilon_0\), there exist hyperbolic
trajectories \(\Gamma_a^\varepsilon(t)\) and \(\Gamma_u^\varepsilon(t)\) of
the perturbed
Hamiltonian system  \eqref{Ham-Eq} satisfying
\begin{equation}\label{Gamma}
    \sup_{t\in\mathbb R}
    |\Gamma_a^\varepsilon(t)-\Gamma_a|
    +
    \sup_{t\in\mathbb R}
    |\Gamma_u^\varepsilon(t)-\Gamma_u|
    =
    O(\varepsilon).
\end{equation}
Moreover, there exists $\alpha_\varepsilon=\alpha_0+O(\varepsilon)$
 and a solution
\[
    \Phi_{\mathrm{up}}^\varepsilon(t;\alpha_\varepsilon)
    =
    \bigl(
    \phi_{\mathrm{up}}^\varepsilon(t;\alpha_\varepsilon),
    \psi_{\mathrm{up}}^\varepsilon(t;\alpha_\varepsilon)
    \bigr)^{\top},
\]
of \eqref{Ham-Eq} such that
\begin{equation}\label{pphi}
    \lim_{t\to-\infty}
    |\Phi_{\mathrm{up}}^\varepsilon(t;\alpha_\varepsilon)
    -
    \Gamma_a^\varepsilon(t)|
    =
    0,
    \qquad
    \lim_{t\to+\infty}
    |\Phi_{\mathrm{up}}^\varepsilon(t;\alpha_\varepsilon)
    -
    \Gamma_u^\varepsilon(t)|
    =
    0.
\end{equation}
In particular, its configuration component gives a perturbed uphill connection
from a neighborhood of \(z_a\) to a neighborhood of \(z_u\).
\end{theorem}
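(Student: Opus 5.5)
The proof follows the standard geometric Melnikov scheme for non-autonomous perturbations. We pass to the extended phase space and apply exponential-dichotomy and roughness results to the time-dependent hyperbolic trajectories. The argument has four steps.

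\textbf{Step 1 (hyperbolic trajectories).} By \textbf{(A3)}($iii$), $\Gamma_a$ and $\Gamma_u$ are hyperbolic equilibria of $\dot\Phi=\mathcal F(\Phi)$. The linearizations $D\mathcal F(\Gamma_a)$ and $D\mathcal F(\Gamma_u)$ therefore have exponential dichotomies on $\mathbb R$. We look for a bounded solution of \eqref{Ham-Eq} of the form $\Gamma_a^\varepsilon=\Gamma_a+w$, where $w$ solves
\[
\dot w-D\mathcal F(\Gamma_a)w=\bigl[\mathcal F(\Gamma_a+w)-\mathcal F(\Gamma_a)-D\mathcal F(\Gamma_a)w\bigr]+\varepsilon\mathcal G(t,\Gamma_a+w).
\]
The linear operator on the left is invertible on $C_b(\mathbb R)$ because of the dichotomy. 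The right-hand side is a $C^1$ map on a small ball in $C_b$, using $\mathcal G\in C_b^2(\mathbb R\times U)$. The implicit function theorem (or a contraction argument) then gives a unique small $w=O(\varepsilon)$, which proves \eqref{Gamma}.

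Roughness of exponential dichotomies shows that $\Gamma_a^\varepsilon$ and $\Gamma_u^\varepsilon$ remain hyperbolic. The same holds for $\Gamma_u^\varepsilon$. Note that $\psi=0$ is not invariant for $\varepsilon\neq0$, since $\mathcal G$ has an $O(\psi_y)$ momentum part but a nonzero $g$ entry. This does not matter, because we only need hyperbolic trajectories near $\Gamma_{a,u}$, not on the zero section.

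\textbf{Step 2 (perturbed manifolds).} In the extended space $(\Phi,t)$, the time-dependent unstable manifold $W^u(\Gamma_a^\varepsilon)$ and stable manifold $W^s(\Gamma_u^\varepsilon)$ exist. They are $C^1$-close to $W^u(\Gamma_a;\mathcal F)\times\mathbb R$ and $W^s(\Gamma_u;\mathcal F)\times\mathbb R$ on compact pieces, uniformly in the time slice. This follows from the Lyapunov--Perron construction with the $C^2_b$ bounds of \textbf{(A3)}($i$).

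We parametrize points near the unperturbed orbit by $(s,\alpha)$, meaning the point $\Phi^0_{\mathrm{up}}(s)$ on the time slice $t=s+\alpha$. Let $\Phi^{u,\varepsilon}(t;s,\alpha)$ and $\Phi^{s,\varepsilon}(t;s,\alpha)$ be the solutions on the two manifolds whose base points project to the same transversal section at $\Phi^0_{\mathrm{up}}(s)$.

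\textbf{Step 3 (distance function and Melnikov integral).} The transversality in \textbf{(A3)}($iii$) says the two unperturbed manifolds span $T\mathcal E_0$. Hence the only direction in which they can fail to meet after perturbation is the conormal direction to $\mathcal E_0$, namely $\nabla H_0(\Phi^0_{\mathrm{up}}(s))$; the directions inside $\mathcal E_0$ can be absorbed by moving along the manifolds. We define the signed splitting
\[
d(\alpha,\varepsilon)=\bigl\langle\nabla H_0(\Phi^0_{\mathrm{up}}(0)),\Phi^{u,\varepsilon}-\Phi^{s,\varepsilon}\bigr\rangle .
\]
The tangential components are killed by a Lyapunov--Schmidt reduction, again using the implicit function theorem and transversality.

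To expand $d$, we differentiate $\Delta^{u,s}(t)=\langle\nabla H_0(\Phi^0_{\mathrm{up}}(t)),\partial_\varepsilon\Phi^{u,s}\rangle$ in $t$. Using $\dot\Phi=\mathcal F+\varepsilon\mathcal G$ and the Hamiltonian identity $\frac{d}{dt}\nabla H_0=-D\mathcal F^{\top}\nabla H_0$ along the flow, which comes from $JD^2H_0$ being Hamiltonian, this gives
\[
\dot\Delta=\bigl\langle\nabla H_0(\Phi^0_{\mathrm{up}}),\mathcal G(t+\alpha,\Phi^0_{\mathrm{up}})\bigr\rangle.
\]
We integrate over $(-\infty,0]$ for $u$ and over $[0,\infty)$ for $s$. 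The boundary terms vanish because $\nabla H_0(\Phi^0_{\mathrm{up}}(t))\to\nabla H_0(\Gamma_{a,u})=0$ exponentially, while the first variations $\partial_\varepsilon\Phi^{u,s}$ stay bounded by Step 1. The result is
\[
d(\alpha,\varepsilon)=\varepsilon M_{\mathrm{up}}(\alpha)+O(\varepsilon^2),
\]
uniformly for $\alpha$ in compact sets, with a remainder that is $C^1$ in $\alpha$.

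\textbf{Step 4 (conclusion).} We set $\tilde d=d/\varepsilon$ and apply the implicit function theorem at $(\alpha_0,0)$ using \textbf{(A3)}($iv$). This gives $\alpha_\varepsilon=\alpha_0+O(\varepsilon)$ with $d(\alpha_\varepsilon,\varepsilon)=0$. At this parameter the two manifolds intersect, and the trajectory through the intersection point is $\Phi^\varepsilon_{\mathrm{up}}(\cdot;\alpha_\varepsilon)$. It satisfies \eqref{pphi} by construction, since it lies in $W^u(\Gamma_a^\varepsilon)\cap W^s(\Gamma_u^\varepsilon)$.

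The main obstacle is Step 3. Two points need care there. First, the boundary terms vanish only if the exponential decay of $\nabla H_0$ along the orbit beats any growth of $\partial_\varepsilon\Phi^{u,s}$; the dichotomy estimates give this. Second, the reduction to a single scalar equation requires that the perturbed flow preserve energy-level structure to first order, even though $H_\varepsilon$ is not conserved. Here I would work with the full splitting vector and argue that transversality within $\mathcal E_0$ lets the implicit function theorem absorb all components except the conormal one.
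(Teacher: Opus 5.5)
Your proposal is correct and follows essentially the same route as the paper: persistence of the endpoint hyperbolic trajectories, projection of the manifold splitting onto the conormal covector $\nabla_\Phi H_0(\Phi^0_{\mathrm{up}})$ (which solves the adjoint variational equation), integration to get $\varepsilon M_{\mathrm{up}}(\alpha)+O(\varepsilon^2)$, and the implicit function theorem at the simple zero $\alpha_0$; your exponential-dichotomy and Lyapunov--Schmidt details supply rigor that the paper only sketches. One correction to your aside in Step 1: $\{\psi=0\}$ \emph{is} invariant for every $\varepsilon$, because the momentum components of both $\mathcal F$ and $\mathcal G$ vanish at $\psi=0$ (the $g$ entry sits in the $\dot\phi_y$ slot), so by uniqueness your $\Gamma^\varepsilon_{a,u}(t)$ are exactly $(z^\varepsilon_{a,u}(t),0,0)$ --- this is harmless for the theorem itself, but it is what makes the momenta of the connection tend to zero at both ends, as the later transition-rate computation requires.
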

\begin{proof}
We divide the proof into three steps.

\textbf{Step 1. Persistence of the endpoint trajectories.}
By Assumption~\textbf{(A3)} ($iii$), \(\Gamma_a\) and \(\Gamma_u\) are
hyperbolic equilibria of the unperturbed Hamiltonian system
\[
    \dot\Phi=\mathcal F(\Phi).
\]
Since \(\mathcal G\) is uniformly bounded and sufficiently smooth, the standard
persistence theorem for hyperbolic trajectories under small nonautonomous
perturbations implies that, for \(|\varepsilon|\) sufficiently small, the
perturbed system \eqref{Ham-Eq} admits hyperbolic trajectories
\(\Gamma_a^\varepsilon(t)\) and \(\Gamma_u^\varepsilon(t)\) satisfying \eqref{Gamma}.
Their stable and unstable manifolds also persist and depend smoothly on
\(\varepsilon\).

\textbf{Step 2. Construction of the Melnikov splitting function.}
Let \(\alpha\in\mathbb R\) be the time-shift parameter. By Assumption~\textbf{(A3)}($ii$), there exists a reference
orbit \(\Phi_{\mathrm{up}}^0(s)\), and the corresponding physical time is
\(s+\alpha\). Thus the perturbed equation \eqref{Ham-Eq} can be written as
\begin{equation}\label{shifted-eq}
    \frac{d\Phi}{ds}
    =
    \mathcal F(\Phi)
    +
    \varepsilon \mathcal G(s+\alpha,\Phi).
\end{equation}
By Assumption~\textbf{(A3)}($iii$), the unperturbed stable and unstable
manifolds intersect transversely in the zero-energy manifold
$\mathcal E_0=\{H_0=0\}$. Hence the only non-tangential splitting direction
is the one normal to $\mathcal E_0$. This direction is represented by the
conormal covector $v(s)=\nabla_\Phi H_0(\Phi_{\mathrm{up}}^0(s))$.
Since $H_0$ is a first integral of the unperturbed Hamiltonian flow, $v(s)$
satisfies the adjoint variational equation
\[
    \dot v(s)
    =
    -D\mathcal F(\Phi_{\mathrm{up}}^0(s))^{\top}v(s).
\]
Thus $v(s)$ spans the one-dimensional conormal direction used to measure the
leading-order distance between the perturbed stable and unstable manifolds.

Here we denote $\Phi^{0,u}_{up}(s)$ (resp. $\Phi^{0,s}_{up}(s)$), the point of $\Phi^0_{up}(s)$ on the unstable manifold  $W^u(\Gamma_a)$ (resp. on the stable manifold $W^s(\Gamma_u)$). The persistence results of an invariant manifold associated with hyperbolic fixed points \cite{Yi1993} indicate the presence of a perturbed unstable (stable) manifold, $W^u(\Gamma_a^{\varepsilon}(s+\alpha))$ (resp. $W^s(\Gamma_u^{\varepsilon}(s+\alpha))$) is $\mathcal O(\varepsilon)$-close to $W^u(\Gamma_a)$ (resp. $W^s(\Gamma_u)$) at finite times $s+\alpha$. There is a point $\Phi_{up}^{\varepsilon,u}(s;\alpha)$ (resp. $\Phi_{up}^{\varepsilon,s}(s;\alpha)$) on the perturbed manifold which is $\mathcal O(\varepsilon)$-close to $\Phi_{up}^{0,u}(s)$ (resp. $\Phi_{up}^{0,s}(s)$). Our aim is to quantify the distance $D^u(s;\alpha,\varepsilon)$ obtained by projecting the vector $\Phi_{\mathrm{up}}^{\varepsilon,u}(s;\alpha)-\Phi_{\mathrm{up}}^{0,u}(s)$ onto the conormal covector to $W^u(\Gamma_a)$ at the point $\Phi_{\mathrm{up}}^{0,u}(s)$. By doing so, we can describe the location of the perturbed manifold $W^u(\Gamma_a^{\varepsilon})$ parametrized by time $\alpha$ to the leading-order in $\varepsilon$. The displacement of $\Phi_{\mathrm{up}}^{\varepsilon,u}(s;\alpha)$ from the
unperturbed orbit $\Phi_{\mathrm{up}}^{0,u}(s)$ in the conormal direction
$v(s)$ is measured by
\begin{equation}\label{Du}
D^u(s;\alpha,\varepsilon)
=
\left\langle
v(s),
\Phi_{\mathrm{up}}^{\varepsilon,u}(s;\alpha)
-
\Phi_{\mathrm{up}}^{0,u}(s)
\right\rangle.
\end{equation}

Recall the unperturbed equation
   $\dot\Phi_{\mathrm{up}}^{0,u}(s)
    =
    \mathcal F(\Phi_{\mathrm{up}}^{0,u}(s))$,
and the adjoint equation for \(v(s)\), by \eqref{shifted-eq}, using Taylor expansions for \(\mathcal F\) and
\(\mathcal G\) and taking  the derivative of \eqref{Du} with respect to $s$ yields 
\begin{align*}
    \frac{d}{ds}D^u(s;\alpha,\varepsilon)
    &=
    \left\langle
    \dot v(s),
    \Phi_{\mathrm{up}}^{\varepsilon,u}(s;\alpha)
    -
    \Phi_{\mathrm{up}}^{0,u}(s)
    \right\rangle \notag\\
    &\quad+
    \left\langle
    v(s),
    \frac{d}{ds}\Phi_{\mathrm{up}}^{\varepsilon,u}(s;\alpha)
    -
    \dot\Phi_{\mathrm{up}}^{0,u}(s)
    \right\rangle \notag\\
    &=
    \left\langle
    -D\mathcal F(\Phi_{\mathrm{up}}^{0,u}(s))^\top v(s),
    \Phi_{\mathrm{up}}^{\varepsilon,u}(s;\alpha)
    -
    \Phi_{\mathrm{up}}^{0,u}(s)
    \right\rangle \notag\\
    &\quad+
    \left\langle
    v(s),
    \mathcal F(\Phi_{\mathrm{up}}^{\varepsilon,u}(s;\alpha))
    -
    \mathcal F(\Phi_{\mathrm{up}}^{0,u}(s))
    \right\rangle \notag\\
    &\quad+
    \varepsilon
    \left\langle
    v(s),
    \mathcal G(s+\alpha,\Phi_{\mathrm{up}}^{\varepsilon,u}(s;\alpha))
    \right\rangle  \notag\\
    &=\varepsilon
    \left\langle
    v(s),
    \mathcal G(s+\alpha,\Phi_{\mathrm{up}}^{0,u}(s))
    \right\rangle
    +
    O(\varepsilon^2),
\end{align*}
where the boundedness of the derivatives of \(\mathcal F\) and
\(\mathcal G\) is also used.
Integrating from \(-\infty\) to \(s\), we obtain
\begin{equation*}
    D^u(s;\alpha,\varepsilon)
    =
    \varepsilon
    \int_{-\infty}^{s}
    \left\langle
    v(\tilde s),
    \mathcal G(\tilde s+\alpha,\Phi_{\mathrm{up}}^{0,u}(\tilde s))
    \right\rangle d\tilde s
    +
    R^u(s;\alpha,\varepsilon),
\end{equation*}
where $R$ is the remainder term.
Similarly, the displacement of the stable manifold  $D^s(s;\alpha,\varepsilon)$ can be expanded in $\varepsilon$ in the expression $$D^s(s;\alpha,\varepsilon)=-\varepsilon \int^{+\infty}_{s} \left\langle
    v(\tilde s),
    \mathcal G(\tilde s+\alpha,\Phi_{\mathrm{up}}^{0,s}(\tilde s))
    \right\rangle d\tilde s+R^s(s;\alpha,\varepsilon).$$
Later we shall quantify the displacement between the perturbed unstable and stable manifolds, in the direction conormal to $W(\Phi_{up}^0)$: 
$$D(s;\alpha,\varepsilon)=\langle v(s), \Phi_{up}^{\varepsilon,u}(s;\alpha)-\Phi_{up}^{\varepsilon,s}(s;\alpha)\rangle.$$
Since $\Phi^0_{up}=\Phi^{0,u}_{up}=\Phi^{0,s}_{up}$ in this instance, we have $D(s;\alpha,\varepsilon)=D^u(s;\alpha,\varepsilon)-D^s(s;\alpha,\varepsilon)$.  Finally, we can obtain that $$D(s;\alpha,\varepsilon)=\varepsilon M_{up}(\alpha)+R(s;\alpha,\varepsilon),$$ where the Melnikov function is the improper integral \eqref{Mup} with $s$ instead of $\tilde s$.

We next verify that the Melnikov integral is well-defined. Since
\(\Gamma_a\) and \(\Gamma_u\) are hyperbolic and
\(\Phi_{\mathrm{up}}^0\) is a heteroclinic orbit, there exist constants
\(C>0\) and \(\lambda>0\) such that
\begin{equation*}
    |\dot\Phi_{\mathrm{up}}^0(s)|
    \le
    C e^{-\lambda |s|},
    \qquad s\in\mathbb R.
\end{equation*}
Using the equality
\[
    \dot\Phi_{\mathrm{up}}^0(s)
    =
    J\nabla_\Phi H_0(\Phi_{\mathrm{up}}^0(s)),
\]
and the nonsingularity of \(J\), we have
\begin{equation*}
    |v(s)|
    =
    |\nabla_\Phi H_0(\Phi_{\mathrm{up}}^0(s))|
    \le
    C e^{-\lambda |s|}.
\end{equation*}
Since \(\mathcal G\) is uniformly bounded on \(\mathbb R\times U\), $M_{up}(\alpha)$ is absolutely convergent. Moreover,
\begin{align*}
    |M_{\mathrm{up}}(\alpha)|
    &\le
    C
    \int_{-\infty}^{+\infty}e^{-\lambda |s|}\,ds
    <
    \infty .
\end{align*}
Besides, there exists a constant $\eta>0$, such that the remainder term satisfies a weighted estimate
\[
    |R(s;\alpha,\varepsilon)|
    \le C\varepsilon^2 \int_{-\infty}^{+\infty}e^{-\eta |\tilde s|}d\tilde s.
\]
Therefore the remainder term is integrable on \(\mathbb R\), and the integrated error is of order \(O(\varepsilon^2)\).

\textbf{Step 3. Persistence of the uphill heteroclinic connection.}
Fix an arbitrary reference section $s=s_0$ along the unperturbed heteroclinic
orbit $\Phi_{\mathrm{up}}^0$. We define the scalar splitting function on this
section by
\[
    D_{s_0}(\alpha,\varepsilon)
    :=
    \left\langle
    v(s_0),
    \Phi_{\mathrm{up}}^{\varepsilon,u}(s_0;\alpha)
    -
    \Phi_{\mathrm{up}}^{\varepsilon,s}(s_0;\alpha)
    \right\rangle.
\]
By Assumption~\textbf{(A3)} ($iv$), there exists a constant \(\alpha_0\in\mathbb R\)
such that
\[
    M_{\mathrm{up}}(\alpha_0)=0,
    \qquad
    M_{\mathrm{up}}'(\alpha_0)\neq0.
\]
Therefore, by the implicit function theorem, there exists a constant
    $\alpha_\varepsilon=\alpha_0+O(\varepsilon)$
such that $D_{s_0}(\alpha_\varepsilon,\varepsilon)=0.$
Since the unperturbed stable and unstable manifolds intersect transversely
inside the zero-energy manifold $\mathcal E_0$, the tangential displacement
does not prevent their intersection. The essential obstruction is the
displacement in the conormal direction.  Hence, it is enough to measure the
splitting by \(D_{s_0}\).
Thus the perturbed unstable manifold of \(\Gamma_a^\varepsilon\) and the
perturbed stable manifold of \(\Gamma_u^\varepsilon\) intersect near the
unperturbed connection, yielding \eqref{pphi}.
The persistence of the perturbed uphill heteroclinic connection follows.
\end{proof}

\section{Transition Rates: Periodic Case}\label{sec5}
In this section, we consider the periodic case of \eqref{dSDE}, i.e., there exists a constant $\tau> 0$ such that $g(t+\tau,\cdot,\cdot)=g(t,\cdot,\cdot)$. To analyze the corresponding optimal transition rate, we first formulate the associated variational problem over one time period, and then identify the leading-order contribution to the action functional.

\subsection{Suspended Flow and  Heteroclinic Connections}

We shall understand the dynamics of nonautonomous systems \eqref{Ham-Eq} by the associated Poincar\'e map. Upon introducing a phase $\theta\in\mathcal S^1 (\mathcal S^1=\mathbb R/{\tau\mathbb Z})$, the system \eqref{Ham-Eq} becomes autonomous on the extended phase space $\mathbb R^{2n+2m}\times\mathcal S^1$:
\begin{align}\label{sus}
	\left\{\begin{array}{l}
		\dot\theta = 1,\\
		\dot{\phi}_x = F_1(\phi),\\
\dot{\phi}_y = \Sigma \psi_y + F_2(\phi) + \varepsilon g(\theta,\phi),\\
\dot{\psi}_x = -\big(\partial_{\phi_x} (F_2(\phi)+\varepsilon g(\theta,\phi))\big)^{\top}\psi_y
    - (\partial_{\phi_x} F_1(\phi))^{\top} \psi_x
    + \frac{1}{2} (\Sigma(\phi)\psi_y)^{\top} (\partial_{\phi_x} \Sigma^{-1}(\phi)) \Sigma(\phi)\psi_y,\\
\dot{\psi}_y = -\big(\partial_{\phi_y} (F_2(\phi)+\varepsilon g(\theta,\phi))\big)^{\top} \psi_y
    - (\partial_{\phi_y} F_1(\phi))^{\top} \psi_x
    + \frac{1}{2} (\Sigma(\phi)\psi_y)^{\top} (\partial_{\phi_y} \Sigma^{-1}(\phi)) \Sigma(\phi)\psi_y, 
	\end{array}\right.
\end{align}
where $(\phi_x, \phi_y, \psi_x, \psi_y, \theta) \in \mathbb{R}^{n} \times \mathbb{R}^{m} \times \mathbb{R}^{n} \times \mathbb{R}^{m} \times \mathcal{S}^1$. Since the $\theta$ equtation is independent of $\phi,\psi$, this is a skew-product system. For sufficiently small $\varepsilon$, \eqref{sus} admits a Poincar\'e map
$$
P_\varepsilon^{t_0} : \Sigma_{t_0} \to \Sigma_{t_0}, \quad
\Sigma_{t_0} = \{ (\phi_x,\phi_y,\psi_x,\psi_y,\theta)^{\top} \mid \theta = t_0 \in [0, \tau] \},
$$
which is a global cross-section at time $t_0$ of the suspended flow. For the case of $\varepsilon=0$, the Poincar\'e map $P_{\varepsilon}^{t_0}$ has hyperbolic fixed points at $\Gamma_a,\Gamma_u,\Gamma_b$, where $\Gamma_a:=(z_a,\mathbf 0), \Gamma_u:=(z_u,\mathbf 0), \Gamma_b:=(z_b,\mathbf 0)$. Under Assumption (A3), the suspended system \eqref{sus} has circular orbits $$
\gamma_a = \Gamma_a \times \mathcal{S}^1, \quad
\gamma_b = \Gamma_b \times \mathcal{S}^1, \quad
\gamma_u = \Gamma_u \times \mathcal{S}^1,
$$ with unstable and stable manifolds $W^u(\gamma_a)$ (resp., $W^u(\gamma_u)$) and $W^s(\gamma_u)$ (resp., $W^s(\gamma_b)$) coincide  and form a $2$-dimensional “cylinder” $\Phi_{up}^0 \times S^1$ (resp., $\Phi_{down}^0\times S^1$), called a heteroclinic manifold, where $\Phi_{up}^0$ (resp., $\Phi_{down}^0$) is a heteroclinic connection between $\Gamma_a$ and $\Gamma_u$ (resp., $\Gamma_u$ and $\Gamma_b$). 

It is a natural question to ask: how to calculate variations in the transition rate of the heteroclinic orbit $\Phi_{up}^0$ and $\Phi_{down}^0$ under small periodic perturbations?

\subsection{Characterization of Transition Rate Variations}
We emphasize that the parameter $t_0$ used in this section is the periodic
counterpart of the time-shift parameter $\alpha$ introduced in Section~\ref{sec4}. Hence, a Melnikov zero $\alpha_0$ corresponds
to a critical phase $t_0=\alpha_0 \mod \tau$.

Section~\ref{sec4} indicates that the existence of a solution $\Phi_{up}^{\varepsilon}(t;t_0)$ (resp., $\Phi_{down}^{\varepsilon}(t;t_1)$) of \eqref{Ham-Eq}, which defines a heteroclinic connection between the unstable manifold of $\Gamma_a^{\varepsilon}$ (resp., $\Gamma_u^{\varepsilon}$) and the stable manifold of $\Gamma_u^{\varepsilon}$ (resp.,$\Gamma_b^{\varepsilon}$) on the section $\Sigma_{t_0}$ (resp.,$\Sigma_{t_1}$) of phase space. Therefore, the first and second components of this solution naturally give rise to the existence of a solution of \eqref{EL} defining a heteroclinic connection $\phi_{up}^{\varepsilon}(t;t_0),$ (resp., $\phi_{down}^{\varepsilon}(t;t_1)$) from $z_a^\varepsilon(t)$ (resp., $z_u^\varepsilon(t)$) to $z_u^\varepsilon(t)$ (resp., $z_b^\varepsilon(t)$) in the configuration space. Furthermore, for each given $t_0$ (resp., $t_1$), this heteroclinic connection is regarded as an MLP of the original stochastic system~\eqref{dSDE}. 

Different from the autonomous case, the concatenation of the perturbed heteroclinic orbits $\phi_{up}^{\varepsilon}(t;t_0)$ and $\phi_{down}^{\varepsilon}(t;t_1)$ yields different critical points of the action functional for the system \eqref{dSDE}. Therefore, we optimize over $t_0$ and $t_1$ to obtain an MLP that is independent of these parameters and determines the optimal transition rate for system \eqref{dSDE}. Consequently, \eqref{FW-deg} can be formally rewritten as follows~\cite{Tao2019,Chao2022}:
\begin{equation}\label{S}
\mathbf S^\varepsilon = \min_{t_0}\{S^\varepsilon[\phi_{up}^{\varepsilon}(t;t_0)]\} + \min_{t_1}\{S^\varepsilon[\phi_{down}^{\varepsilon}(t;t_1)]\}.
\end{equation}
Since $\phi_{down}^{\varepsilon}(t;t_1)$ is always moving along the perturbed downhill heteroclinic orbits, the first-order term in the Taylor expansion of $S^\varepsilon[\phi_{down}^{\varepsilon}(t;t_1)]$ with respect to $\varepsilon$ is zero. 
Therefore, we only need to focus on optimizing $S^\varepsilon[\phi_{up}^{\varepsilon}(t;t_0)]$ over $t_0$. That is, $$\mathbf S^\varepsilon = \min_{t_0} S^\varepsilon[\phi_{up}^{\varepsilon}(t;t_0)].$$

We now give the following main theorem of this section.
\begin{theorem}\label{the1}
Consider the degenerate non-autonomous SDE \eqref{dSDE} with a $\tau$-periodic perturbation $g(t,x,y)$. Under Assumptions \textbf{(A1)--(A3)}, for sufficiently small $\varepsilon$, there exists  an uphill heteroclinic connection from stable periodic orbits $z_a^\varepsilon(t)$ to unstable periodic orbits $z_u^\varepsilon(t)$ and a deterministic downhill connection from $z_u^\varepsilon(t)$ to stable periodic orbits $z_b^\varepsilon(t)$. In addition, the escape rate $\mathcal R_\varepsilon$ from $z_a^{\varepsilon}(t)$ to $z_b^{\varepsilon}(t)$ is asymptotically equivalent to $exp(-\mathbf S^{\varepsilon}/\mu)$, where $\mathbf S^\varepsilon = S_0+\varepsilon\delta S_e+O(\varepsilon^2)$, and $\delta S_e$ is given by
\begin{align}
   \delta S_e& = \min_{t_0} \delta S(t_0),\nonumber 
\end{align}
where
\begin{equation}
\delta S(t_0) = -\int_{-\infty}^{+\infty}\psi_y^*(t)^{\top}g(t+t_0,\phi^*(t))dt.\nonumber
\end{equation}
\end{theorem}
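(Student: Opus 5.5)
The proof has two parts. The existence claims come from earlier results. The first-order expansion of the action comes from a standard envelope-type argument.

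\textbf{Existence.} The uphill connection follows from Theorem~\ref{thm:persistence-uphill}, specialized to $\tau$-periodic $g$. The hyperbolic trajectories $\Gamma_a^\varepsilon,\Gamma_u^\varepsilon$ are then $\tau$-periodic; this follows from uniqueness of the bounded hyperbolic trajectory near $\Gamma_a,\Gamma_u$, or equivalently from Lemma~\ref{Lemma:persistence} lifted with $\psi\equiv0$. Their configuration components are $z_a^\varepsilon(t),z_u^\varepsilon(t)$. The phase $\alpha$ becomes the section parameter $t_0$ of the Poincar\'e map $P^{t_0}_\varepsilon$.

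For the downhill connection, observe that $\{\psi=0\}$ is invariant for \eqref{Ham-Eq}, because $\mathcal F$ and $\mathcal G$ have momentum components linear or quadratic in $\psi$. On this set the flow is the deterministic system $\dot z=A^\varepsilon(t,z)$. The unstable manifold of the saddle orbit $z_u^\varepsilon$ persists by hyperbolicity and continues to lie in the basin of $z_b^\varepsilon$, which is open and persists. This gives a connection with zero action. The action of any competing downhill path is therefore $\ge0$, and the minimum in \eqref{S} for the downhill part is exactly $0$. The rate statement $\mathcal R_\varepsilon\asymp\exp(-\mathbf S^\varepsilon/\mu)$ is then the Freidlin--Wentzell relation from Section~\ref{sec:main1} with $\mathbf S^\varepsilon=\min_{t_0}S^\varepsilon[\phi^\varepsilon_{up}(\cdot;t_0)]$.

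\textbf{First-order expansion.} Write the action in Hamiltonian form along a solution of \eqref{Ham-Eq}:
\[
S^\varepsilon=\int \big(\psi^\top\dot\phi-H_\varepsilon(t,\phi,\psi)\big)\,dt .
\]
This equals $\int\frac12\psi_y^\top\Sigma\psi_y\,dt$ on trajectories satisfying the constraint, which matches \eqref{FW-deg} via $r^\varepsilon=\Sigma\psi_y$. Set $\Phi^\varepsilon_{up}(t;t_0)=\Phi^0_{up}(t)+\varepsilon\Phi_1(t)+O(\varepsilon^2)$, with the time origin shifted so the forcing appears as $g(t+t_0,\cdot)$. Differentiating in $\varepsilon$ at $\varepsilon=0$ gives two kinds of terms.
\begin{itemize}
\item The explicit derivative $-\partial_\varepsilon H_\varepsilon=-H_1=-\psi_y^{*\top}g(t+t_0,\phi^*)$.
\item The first variation of the unperturbed action $\int(\psi^\top\dot\phi-H_0)\,dt$ in the direction $\Phi_1$. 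Since $\Phi^0_{up}$ solves Hamilton's equations, this reduces to the boundary term $[\psi^{*\top}\phi_1]_{-\infty}^{+\infty}$.
\end{itemize}
The boundary term vanishes. Indeed, $\psi^*(t)\to0$ exponentially by hyperbolicity of $\Gamma_a,\Gamma_u$, while $\phi_1$ stays bounded because the perturbed endpoints $\Gamma^\varepsilon_{a,u}(t)$ are $O(\varepsilon)$-close uniformly, by \eqref{Gamma}. This yields
\[
S^\varepsilon[\phi^\varepsilon_{up}(\cdot;t_0)]=S_0+\varepsilon\,\delta S(t_0)+O(\varepsilon^2).
\]
The integral defining $\delta S$ converges absolutely since $|\psi^*|\le Ce^{-\lambda|t|}$ and $g$ is bounded.

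\textbf{Minimization.} The map $t_0\mapsto\delta S(t_0)$ is continuous and $\tau$-periodic, so its minimum exists. The $O(\varepsilon^2)$ remainder is uniform in $t_0\in[0,\tau]$ by compactness and the smooth dependence given in Theorem~\ref{thm:persistence-uphill}. Hence
\[
\min_{t_0}\big(S_0+\varepsilon\delta S(t_0)+O(\varepsilon^2)\big)=S_0+\varepsilon\min_{t_0}\delta S(t_0)+O(\varepsilon^2).
\]

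\textbf{Main obstacle.} The hard step is justifying differentiation under the infinite-time integral. This needs the uniform-in-$t$ expansion $\Phi^\varepsilon_{up}=\Phi^0_{up}+\varepsilon\Phi_1+O(\varepsilon^2)$ together with control of the tails. I would split $\mathbb R$ into a compact window $[-T,T]$, where smooth dependence on $\varepsilon$ holds, and two tails. In the tails the perturbed orbit lies in the local unstable (resp.\ stable) manifold of the hyperbolic trajectory, so $\psi^\varepsilon$ decays exponentially uniformly in $\varepsilon$. Both the action density $\frac12\psi_y^\top\Sigma\psi_y$ and its $\varepsilon$-derivative are then bounded by $Ce^{-2\lambda|t|}$, and dominated convergence applies.
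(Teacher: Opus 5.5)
Your computation is the paper's own. You rewrite the action as the Hamiltonian action $\int(\psi^\top\dot\phi-H_\varepsilon)\,dt$. You observe that the first variation of the unperturbed action at $\Phi^0_{\mathrm{up}}$ collapses to the boundary term $[\psi^{*\top}\phi_1]_{-\infty}^{+\infty}$, which vanishes, and you keep the explicit term $-\int H_1$. The downhill part lives on $\{\psi=0\}$. Your invariance argument for $\{\psi=0\}$ also shows $\Gamma^\varepsilon_{a,u}(t)=(z^\varepsilon_{a,u}(t),0)$, so $\psi^\varepsilon\to0$ at both ends; this and your tail estimates are more careful than the paper.

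The step that fails as written is the minimization. You treat $t_0\mapsto\Phi^\varepsilon_{\mathrm{up}}(\cdot;t_0)$ as a smooth family of genuine solutions of \eqref{Ham-Eq} connecting $\Gamma^\varepsilon_a$ to $\Gamma^\varepsilon_u$ for every $t_0\in[0,\tau]$, and you cite Theorem~\ref{thm:persistence-uphill} for it. That theorem gives no such family. Near $\Phi^0_{\mathrm{up}}(\cdot-t_0)$ the perturbed manifolds are split in the conormal direction by $\varepsilon M_{\mathrm{up}}(t_0)+O(\varepsilon^2)$. So a connection with phase $t_0$ exists only for $t_0=\alpha_\varepsilon=\alpha_0+O(\varepsilon)$, with $\alpha_0$ a simple zero of $M_{\mathrm{up}}$. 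For any other phase there is no orbit whose action is $S_0+\varepsilon\delta S(t_0)+O(\varepsilon^2)$. Your compactness/uniformity argument therefore has nothing to act on, and you have not shown that $\min_{t_0}$ of your expansion is the action of any actual connection.

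The missing idea is the identity
\[
\frac{d}{dt_0}\delta S(t_0)=-M_{\mathrm{up}}(t_0).
\]
To get it, integrate $\frac{d}{dt}H_1(t+t_0,\Phi^0_{\mathrm{up}}(t))=\partial_tH_1+\nabla_\Phi H_1^{\top}J\nabla_\Phi H_0$ over $\mathbb R$; the boundary terms vanish because $\psi^*\to0$. Then use $\mathcal G=J\nabla_\Phi H_1$ and $J^\top=-J$. The paper records this in the remark after the proof.

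With this identity the argument closes as follows.
\begin{itemize}
\item Every genuine connection near $\Phi^0_{\mathrm{up}}$ has a phase $\alpha$ with $M_{\mathrm{up}}(\alpha)=O(\varepsilon)$. Its action is $S_0+\varepsilon\delta S(\alpha)+O(\varepsilon^2)\ge S_0+\varepsilon\min\delta S+O(\varepsilon^2)$.
\item The global minimizer $t_0^*$ of the smooth $\tau$-periodic function $\delta S$ is a critical point, hence a zero of $M_{\mathrm{up}}$.
\item Suppose that zero is simple. This is an extra nondegeneracy you should state, since \textbf{(A3)}($iv$) only guarantees that some zero is simple, and at a minimum one only gets $M_{\mathrm{up}}'(t_0^*)=-\delta S''(t_0^*)\le 0$. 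Then Theorem~\ref{thm:persistence-uphill} yields a connection at $\alpha_\varepsilon=t_0^*+O(\varepsilon)$.
\item Your envelope computation applies to that connection, and $\delta S(\alpha_\varepsilon)=\delta S(t_0^*)+O(\varepsilon)$ gives $\mathbf S^\varepsilon=S_0+\varepsilon\delta S_e+O(\varepsilon^2)$.
\end{itemize}
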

\begin{proof} 

Keep in mind that the relationship among the Euler-Lagrange problem \eqref{EL}, the Hamiltonian problem \eqref{Ham-Eq} and the local minimizers (i.e., MLPs) of action functional $S^\varepsilon[\phi(t)]$ are shown in section~\ref{sec:main1} and~\ref{sec3}. Combing figure~\ref{thm:persistence-uphill}, we can easily obtain a heteroclinic connection from $z_a^{\varepsilon}$ to $z_b^{\varepsilon}$ via $z_u^{\varepsilon}$.

\textbf{Step 1: Reformulation of the Freidlin--Wentzell action functional}
First, we reformulate the action functional $S^{\varepsilon}[\cdot]$ in \eqref{MLP} into an equivalent form (i.e., $A^{\varepsilon}[\cdot]$). 
We introduce a Hamiltonian action
\begin{align}\begin{split}\label{HA}
A^\varepsilon [\Phi] &= \int_{\Phi} \psi_x^{\top}d\phi_x+\psi_y^{\top}d\phi_y-H_{\varepsilon}(t,\phi,\psi)dt,
\end{split}\end{align}
where $\Phi= \{ (\phi_x,\phi_y,\psi_x,\psi_y)^{\top}, -\infty < t < \infty \}$ is a path in phase space, and for the expression of $H_\varepsilon$, see \eqref{Hf}. Stationary curves of the action~\eqref{HA} are Hamiltonian trajectories~\eqref{Ham-Eq} and $\phi_x,\phi_y,\psi_x,\psi_y$ in \eqref{HA} are mutually independent. By exploiting the equivalence of Lagrangian mechanics and Hamiltonian mechanics, we have 
$$S^{\varepsilon}[\phi]=A^{\varepsilon}[\Phi],$$ along the instanton solutions of \eqref{EL} or \eqref{Ham-Eq}. More precisely,
\begin{equation}\label{sa}S^{\varepsilon}[\phi^{\varepsilon}_{up}(t;t_0)]=A^{\varepsilon}[\Phi_{up}^{\varepsilon}(t;t_0)].\end{equation}
This identity follows directly from the Legendre duality between the Lagrangian and Hamiltonian formulations along the corresponding instanton trajectories.
For more details on the proof, see~\cite{Meiss2007}. 

\textbf{Step 2: Minimum of the
FW action functional for system \eqref{dSDE} with $\varepsilon=0$}

When $\varepsilon=0$, we focus on the transition from $z_a$ to $z_u$. Note that in this case the heteroclinic orbit is $\Phi_{up}^0=(\phi_x^*,\phi_y^*,\psi_x^*,\psi_y^*)$, we find that 
\begin{equation*}\begin{aligned}
A_{0}[\Phi] & =\int\left(
(\psi_y^*)^{\top}\bigl(\dot{\phi}^*_y - F_2(\phi^*)\bigr)
-\frac12\,(\psi_y^*)^{\top}\Sigma\psi_y^*
\right)dt\\&=\frac12\int\|\Sigma\psi_y^*\|_{\Sigma^{-1}}^2dt,
\end{aligned}\end{equation*}
and
\begin{equation*}\begin{aligned}
S_{0}[\phi]=\frac{1}{2}\int_{-\infty}^{+\infty}\|\dot{\phi^*_{y}}-F_{2}\|_{\Sigma^{-1}}^{2}dt=\frac12\int\|\Sigma\psi_y^*\|_{\Sigma^{-1}}^2dt,\end{aligned}
\end{equation*}
under the boundary conditions: $\psi^*(-\infty)=\mathbf 0,\;\psi^*(+\infty)=\mathbf 0$.
One can easily note that $S_0=A_0$. The same argument applies to the transition from $z_u^{\varepsilon}(t)$ to $z_b^{\varepsilon}(t)$, for which the corresponding action $S_0$ is zero when $\varepsilon = 0$.

\textbf{Step 3: The relation of the minimizers of $S^{\varepsilon}[\cdot]$ and $S[\cdot]$.}

In the following, we will use the reformulation \eqref{sa} to study the relationship between $S^{\varepsilon}[\cdot]$ and $S[\cdot]$, which is equivalent to dealing with the relationship between $A^{\varepsilon}[\cdot]$ and $A[\cdot]$.

 In light of \textbf{Step 1}, we now perform a linear-theory calculation of the action $S^{\varepsilon}[\cdot]$ inspired by~\cite{Assaf2008} and approximate the rate of the metastable transition. Since Assumption \textbf{(A2)} implies that $\varepsilon\ll1$, the term $\varepsilon H_1(t,\phi_x,\phi_y,\psi_x,\psi_y)$ in \eqref{Hf} can be treated perturbatively. Applying Taylor expansions to the perturbed MLP yields
 $$\Phi_{up}^{\varepsilon}(t;t_0)=\Phi_{up}^{0}(t-t_0)+\varepsilon\bar\Phi_{up}(t;t_0)+O(\varepsilon^2),$$ where $\bar\Phi_{up}(t;t_0)=(\bar\phi_x,\bar\phi_y,\bar\psi_x,\bar\psi_y)^{\top}(t;t_0).$
Since $H^\varepsilon(t,\Phi)=H_0(\Phi)+\varepsilon H_1(t,\Phi)$, we have
\[
H^\varepsilon(t,\Phi^\varepsilon_{up})
=
H_0(\Phi^0_{up})
+\varepsilon DH_0(\Phi^0_{up})\bar\Phi_{up}
+\varepsilon H_1(t,\Phi^0_{up})
+O(\varepsilon^2).
\]
Applying a Taylor expansion to the action $A^{\varepsilon}[\cdot]$ given in \eqref{HA}, we have that
\begin{align*}
    A^{\varepsilon}[\Phi_{up}^{\varepsilon}(t;t_0)]&=\int_{-\infty}^{+\infty}\bigg((\psi_x^{*}+\varepsilon\bar\psi_x)^{\top}(\dot\phi_x^{*}+\varepsilon\dot{\bar\phi}_x)+(\psi_y^{*}+\varepsilon\bar\psi_y)^{\top}(\dot\phi_y^{*}+\varepsilon\dot{\bar\phi}_y)\\&\quad-\big(H_0(\Phi_{up}^0(t-t_0))+\varepsilon\bar\phi_x^{\top}\frac{\partial H_0}{\partial\phi_x}+\varepsilon\bar\phi_y^{\top}\frac{\partial H_0}{\partial\phi_y}+\varepsilon\bar\psi_x^{\top}\frac{\partial H_0}{\partial\psi_x}\nonumber\\&\quad+\varepsilon\bar\psi_y^{\top}\frac{\partial H_0}{\partial\psi_y}\big)+\varepsilon H_1(\Phi_{up}^0(t-t_0))\bigg)dt+O(\varepsilon^2)\\&=A_0+\varepsilon [\psi_x^{\top}\bar\phi_x+\psi_y^{\top}\bar\phi_y]|_{-\infty}^{+\infty}-\varepsilon\int_{-\infty}^{+\infty}H_1(\Phi_{up}^0(t-t_0))dt+O(\varepsilon^2).
\end{align*} 
Using the boundary conditions, we have $\psi_x^{\top}\bar\phi_x+\psi_y^{\top}\bar\phi_y|_{-\infty}^{+\infty}=0$. Thus, we finally conclude that $S^{\varepsilon}=S_0+\varepsilon\delta S(t_0)+O(\varepsilon^2)$ with
\begin{align}\begin{split}\label{delta}
\delta S(t_{0}) &=-\int_{-\infty}^{+\infty}H_{1}(t,\phi_{x}^*(t-t_0),\phi_{y}^*(t-t_0),\psi_{x}^*(t-t_0),\psi_{y}^*(t-t_0))dt \\
 & =-\int_{-\infty}^{+\infty}\psi_y^*(t-t_{0})^{\top}g(t,\phi_{x}^*(t-t_{0}),\phi_{y}^*(t-t_{0}))dt\\&=-\int_{-\infty}^{+\infty}\psi_y^*(t)^{\top}g(t+t_0,\phi^*(t))dt.\end{split}
\end{align} 
Since $\psi_y(t;t_0)\equiv0$ along the downhill heteroclinic orbits, a similar procedure can be used to understand the transition from  $z_u^\varepsilon(t)$ to $z_b^\varepsilon(t)$, and $\delta S(t_0)$ given in \eqref{delta} will vanish for this case. Together with \textbf{Step 2} and the fact that $S_0=0$, we verify that $S^\varepsilon[\phi_{down}^{\varepsilon}(t;t_1)]$ in \eqref{S} is zero to the first order of $\varepsilon$. This finishes the proof.
\end{proof}
\begin{remark}
    To find the optimal first-order correction to the minimum action's value, we have to minimize $\delta S(t_0)$ with respect to $t_0$, which thus yields the equation for the optimal $t_0$. From~\eqref{delta}, we have
\begin{align*}
\frac{d\delta S(t_0)}{dt_0}
&=
-\int_{-\infty}^{+\infty}
\partial_t H_1\bigl(t+t_0,\Phi_{\mathrm{up}}^0(t)\bigr)\,dt .
\end{align*}
Along the unperturbed Hamiltonian orbit $\Phi_{\mathrm{up}}^0(t)$, the chain rule gives
\begin{align*}
\frac{d}{dt}
H_1\bigl(t+t_0,\Phi_{\mathrm{up}}^0(t)\bigr)
&=
\partial_t H_1\bigl(t+t_0,\Phi_{\mathrm{up}}^0(t)\bigr)
+
\{H_1,H_0\}
\bigl(t+t_0,\Phi_{\mathrm{up}}^0(t)\bigr).
\end{align*}
Since
$H_1\bigl(t+t_0,\Phi_{\mathrm{up}}^0(t)\bigr)
\to0,
\;t\to\pm\infty,$ integrating the above identity over $\mathbb R$ yields
\begin{align*}
\int_{-\infty}^{+\infty}
\partial_t H_1\bigl(t+t_0,\Phi_{\mathrm{up}}^0(t)\bigr)\,dt
&=
-\int_{-\infty}^{+\infty}
\{H_1,H_0\}
\bigl(t+t_0,\Phi_{\mathrm{up}}^0(t)\bigr)\,dt .
\end{align*}
Hence,
\begin{align*}
\frac{d\delta S(t_0)}{dt_0}
&=
\int_{-\infty}^{+\infty}
\{H_1,H_0\}
\bigl(t+t_0,\Phi_{\mathrm{up}}^0(t)\bigr)\,dt .
\end{align*}
We now relate this condition to the Melnikov function. Since $\mathcal G(t,\Phi)
=
J\nabla_\Phi H_1(t,\Phi)$, we have
\begin{align*}
M_{\mathrm{up}}(t_0)
&=
\int_{-\infty}^{+\infty}
\left\langle
\nabla_\Phi H_0(\Phi_{\mathrm{up}}^0(t)),
\mathcal G(t+t_0,\Phi_{\mathrm{up}}^0(t))
\right\rangle dt \notag\\
&=
\int_{-\infty}^{+\infty}
\nabla_\Phi H_0(\Phi_{\mathrm{up}}^0(t))^{\top}
J\nabla_\Phi H_1(t+t_0,\Phi_{\mathrm{up}}^0(t))\,dt \notag\\
&=
-\int_{-\infty}^{+\infty}
\{H_1,H_0\}
\bigl(t+t_0,\Phi_{\mathrm{up}}^0(t)\bigr)\,dt .
\end{align*}
Therefore, we have the following expression, 
\begin{align*}
\frac{dS^{\varepsilon}(t_0)}{dt_0}=\varepsilon\frac{d\delta S(t_0)}{dt_0}+O(\varepsilon^2)=\varepsilon\int_{-\infty}^{+\infty}\{H_1,H_0\}dt+O(\varepsilon^2)=-\varepsilon M_{\mathrm{up}}(t_0)+O(\varepsilon^2), \end{align*} which means that $M_{up}(t_0)$ has simple zeros yields a sufficient condition for the existence of perturbed heteroclinc connection, and these zeros are critical points of the function $S^{\varepsilon}(t_0)$. 
\end{remark}

\section{Illustrative Examples}\label{sec6}

Gradient systems and Langevin systems are natural examples, for which the uphill transition path is governed by time-reversal trajectories; see \cite{Souza2019,Chao2022,Chao2026}. In this section, we present two non-gradient examples that cannot be described by such time-reversal arguments.

\paragraph{Example A: A Degenerate 2D Orthogonal Stochastic System}

We now present a simple illustrative example for our theoretical results. Let us first consider the autonomous orthogonal system:
\begin{align*}
    dz=(-\nabla V(z)+b(z))dt+\begin{pmatrix}
0 \\
\sqrt\mu
\end{pmatrix} dB_t,\;\;z=(x,y)\in \mathbb R^{2},
\end{align*} 
with $V(x,y) = \frac{(1-y^2)^2}{4}$ and $b(x,y) =(
1 - y^2 - x,
0)^\top$, where the functions satisfy
$$
\nabla V(z)\cdot b(z)=0
$$
for all $z\in\mathbb R^{2}$.
Then let $G(z,t)=(0,g(z,t))^{\top}$, and $g(z,t)=h(z)cos(\omega t+\vartheta)$, where $h$ is a smooth function, $\theta$ is a constant phase and $\omega\ne0$. Therefore non-autonomous orthogonal system reduces to:\begin{equation}\label{orthogonal_1} 
\begin{cases}
dx = (1 - y^2 - x)dt,\\[1mm]
dy = (y - y^3 +\varepsilon h(x,y)cos(\omega t+\vartheta))dt+\sqrt{\mu}\, dB_t.
\end{cases}
\end{equation} 
When $\varepsilon=\mu=0$, there are 3 fixed points: 
$z_a=(0,-1)^{\top},\; z_u=(1,0)^{\top},\; z_b=(0,1)^{\top},$ 
with $z_a$ and $z_b$ being sinks and $z_u$ being a saddle.

When $\varepsilon=0,$ \cite{Tao2018} shows that there exists a heteroclinc orbit satisfying the uphill equation $dx = (1 - y^2 - x)dt,\;
dy = (-y + y^3)dt$ with boundary condition $(x,y)(-\infty)=z_a,\;(x,y)(+\infty)=z_u$  and the downhill connection is naturally generated by the deterministic flow.
Figure~\ref{Fig1}(a) illustrates a noise-induced transition from $z_a$ to $z_b$ in the absence of periodic forcing ($\varepsilon=0$). Most sample paths remain trapped in a neighborhood of the stable equilibrium $z_a$ for long periods of time. Under the influence of small noise, however, some paths escape from the basin of attraction of $z_a$, pass through a neighborhood of the saddle point $z_u$, and eventually approach the stable equilibrium $z_b$. The heteroclinic connection through $z_u$ therefore provides the deterministic skeleton of the transition and identifies the most probable transition route from $z_a$ to $z_b$.

\begin{figure}[htbp]
\hspace{-1cm}
\begin{minipage}[t]{0.4\linewidth}
\centering
\includegraphics[height=5.8cm]{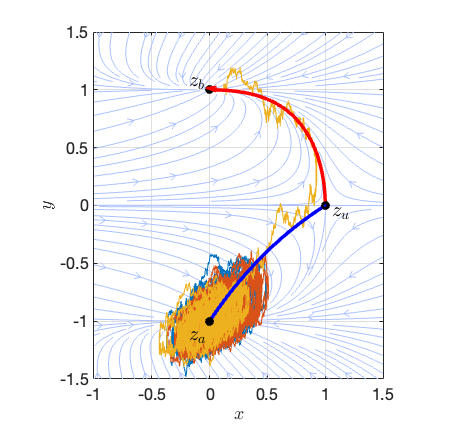}
\centerline{$(a)$}
\end{minipage}%
\hspace{-0.5cm}
\begin{minipage}[t]{0.45\linewidth}
\centering
\includegraphics[height=5.8cm]{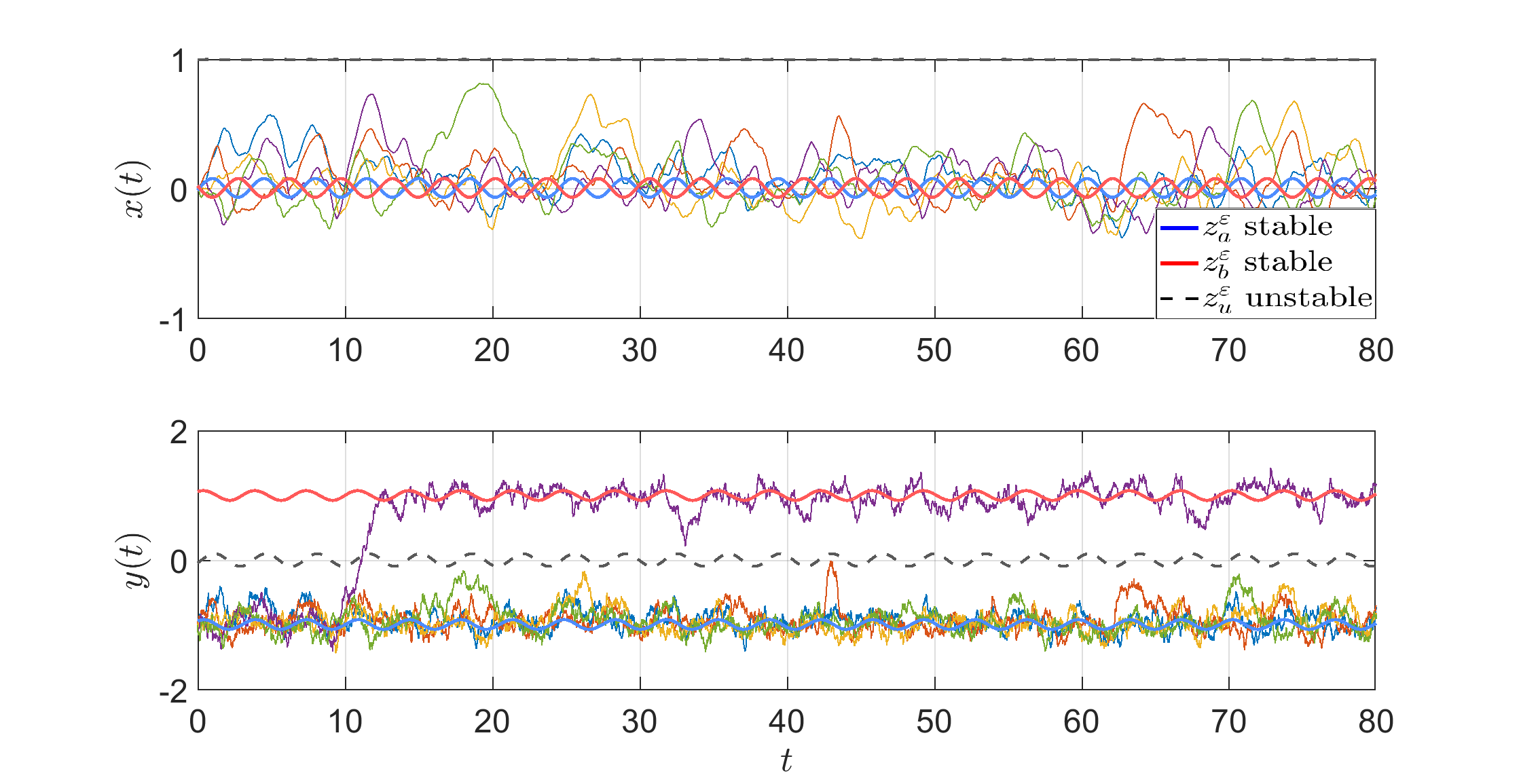}
\centerline{$(b)$}
\end{minipage}
\caption{(Color online) (a) \textit{Phase portrait, heteroclinic orbits, and sample paths of the stochastic system without periodic forcing.} The thin light-blue streamlines depict the phase portrait of the corresponding deterministic system. The thick blue curve is the heteroclinic orbit connecting the stable node $z_a$ to the saddle point $z_u$, while the red curve is the heteroclinic orbit connecting $z_u$ to the stable node $z_b$. The colored curves (10 in total) represent sample paths of the stochastic system for $\varepsilon=0$ and $\mu=0.25$. (b) \textit{Noise-induced transitions between metastable periodic orbits.} The upper panel displays sample paths of $x$, while the lower panel displays the corresponding paths of $y$. Here, $\mu=0.08$, $\varepsilon=0.2$, $\omega=1.8$, and $\vartheta=0$.
}\label{Fig1}
\end{figure}

\text{\textit{Verification of assumed conditions for the persistence of heteroclinic orbits}}\par
For $\varepsilon=0$, it is easy to see that the associated Hamiltonian system of \eqref{orthogonal_1}
admits an ``uphill" heteroclinic orbit \[
\Phi_{up}^0(t)
=
\left(
1-e^{-t}\arctan(e^t),
-\frac{1}{\sqrt{1+e^{2t}}},
0,
\frac{2e^{2t}}{(1+e^{2t})^{3/2}}
\right)^{\top},
\]
satisfying boundary conditions $\Phi_{up}^0(-\infty)=(0,-1,0,0)^{\top},\;\Phi_{up}^0(+\infty)=(1,0,0,0)^{\top}$ and admits a unique bounded conormal covector \[
v(t)
=
\nabla H\bigl(\Phi^0_{up}(t)\bigr)
=
\left(
0,\,
\frac{2e^{2t}(e^{2t}-2)}{(1+e^{2t})^{5/2}},\,
e^{-t}\arctan(e^t)-\frac{1}{1+e^{2t}},\,
\frac{e^{2t}}{(1+e^{2t})^{3/2}}
\right)^{\top}.
\] satisfying $\dot{v}(t)=-D\mathcal F(\Phi_{up}^0)^{\top}v(t)$. Thus, it is easy to prove Assumption~\textbf{(A3)} ($iii$).

Then the associated Melnikov function reads:\[
M_{up}(\alpha)
=
\int_{-\infty}^{+\infty}
\frac{2e^{2s}(2-e^{2s})}{(1+e^{2s})^{5/2}}
\cos(\omega(s+\alpha)+\vartheta)\,ds,
\]
here we take $h(z)=1$ for simplicity. Then by a direct Fourier transform computation,
\begin{align*}
M_{\mathrm{up}}(\alpha)
=
|F(\omega)|
\cos\bigl(\omega\alpha+\vartheta+\arg F(\omega)\bigr),
\end{align*}
where $F(\omega)
=
-i\omega
\frac{2}{\sqrt{\pi}}
\Gamma\left(1+\frac{i\omega}{2}\right)
\Gamma\left(\frac12-\frac{i\omega}{2}\right).$ 
For $\omega\neq0$, $|F(\omega)|>0$, there exists a constant $\alpha_0\in\mathbb R$ such that
\begin{align}
M_{\mathrm{up}}(\alpha_0)=0,
\qquad
M_{\mathrm{up}}'(\alpha_0)\neq0.\nonumber
\end{align}
Therefore, Assumption \textbf{(A3)} is verified for the orthogonal system.
Consequently, for sufficiently small periodic perturbations, there exists a perturbed heteroclinic orbit near the original one.

Figure \ref{Fig1}(b) shows a noise-induced transition under periodic forcing. 
The equilibria $z_a$, $z_b$, and $z_u$ become periodic orbits 
$z_a^\varepsilon$, $z_b^\varepsilon$, and $z_u^\varepsilon$, respectively. 
Most sample paths stay near the stable periodic orbit $z_a^\varepsilon$ for a long time. 
However, under small noise, some paths escape from $z_a^\varepsilon$, pass near the unstable periodic orbit $z_u^\varepsilon$, and finally move towards the stable periodic orbit $z_b^\varepsilon$. 
Thus, this figure illustrates a metastable transition between two stable periodic orbits.

\text{\textit{Variation of the Transition Rate}}

A direct calculation shows that $y$-component of the uphill heteroclinic orbits can be expressed as $y(t)=-(1+e^{2t})^{-\frac12}$.
For the case of $h(z)=1$, we obtain \[
\int_{-\infty}^{\infty} \dot{y}(t)e^{i\omega t} dt
= \frac{\Gamma\left(\frac{1-i\omega}{2}\right)\Gamma\left(1+\frac{i\omega}{2}\right)}{\sqrt{\pi}}.
\]
Then the correction $\delta S_e=-\frac2{\sqrt\pi}|\Gamma\left(\frac{1-i\omega}{2}\right)\Gamma\left(1+\frac{i\omega}{2}\right)|$.
\begin{figure}[htpp]
    \centering
    \includegraphics[width=0.5\linewidth]{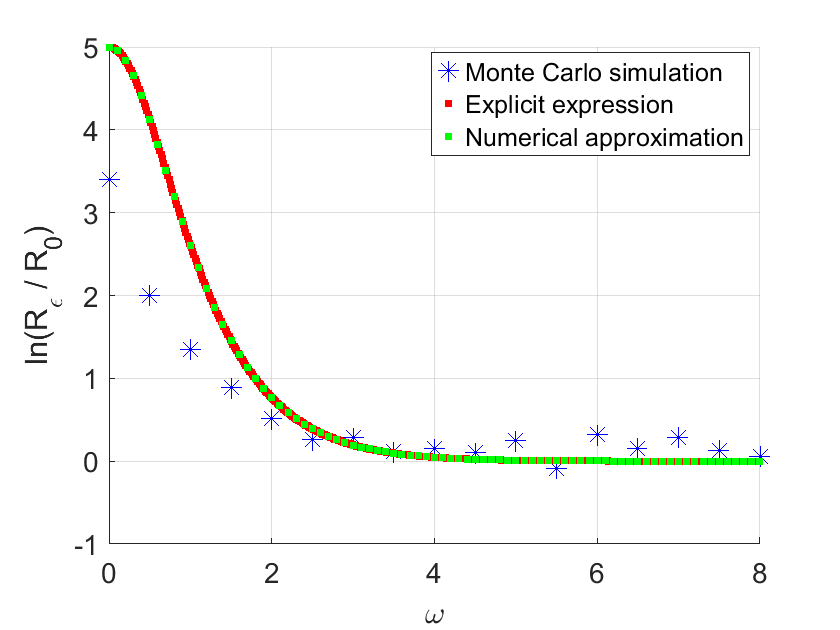}
    \vspace{-0.1cm}
    \caption{(Color online) \textit{Transition rate predicted by the asymptotic formula \eqref{rate-1} as a function of the forcing frequency $\omega$.} The solid curve is obtained from the explicit expression for $\delta S_e$, the dotted curve from numerical evaluation of the integral via piecewise trapezoidal quadrature, and the asterisks from Kramers' method combined with Monte Carlo simulations. Here, $\mu=0.08$ and $\varepsilon=0.2$.}
\label{fig:MC}
\end{figure}

For the case that explicit expressions of $\delta S_e$ is available,  we can also compute $\delta S_e$ by both the explicit expressions of $\delta S_e$ and by numerically approximating the integral in \eqref{delta}. Moreover,  by Theorem~\ref{the1}, the transition rate $\mathcal R_\varepsilon$ from one stable periodic orbit $z_a^\varepsilon(t)$ to another stable periodic orbit $z_b^\varepsilon(t)$ is
$\mathcal R_\varepsilon = C \exp\{-S^\varepsilon/\mu\} \approx \mathcal R_0 \exp\{-\varepsilon \delta S_e/\mu\},$ to first order in $\varepsilon$ or, equivalently,
\begin{equation}\label{rate-1}
\ln \frac{\mathcal R_\varepsilon}{\mathcal R_0} = - \frac{\varepsilon}{\mu} \delta S_e + \frac{1}{\mu} O(\varepsilon^2).
\end{equation}
where $\mathcal R_0 = C \exp\{-1/2\mu\}$ is the transition rate for the autonomous system \eqref{orthogonal_1} with $\varepsilon=0$, and $C$ is a positive constant. 
In figure \ref{fig:MC}, the Monte Carlo simulations, explicit expression and numerical approximation of the expression $\ln(\mathcal R_\varepsilon /\mathcal R_0)$ with respect to the transition rate are presented, where $\mu = 0.08,\;\varepsilon=0.2$.

\paragraph{Example B: A simplified FitzHugh--Nagumo system} 

The FitzHugh–Nagumo model is a well-known two-variable nonlinear system that has been studied in~\cite{FitzHugh1961,Gong2001}. Here, we consider the following simplified system:
\begin{align}
\begin{cases}\label{case1}
dx = \big(x - \tfrac{1}{3}x^3 - y\big)\,dt,\\[4pt]
dy = \big[-y+\varepsilon h(x,y)\cos(\omega t+\vartheta)\big]\,dt
+ \sqrt{\mu}\,dB_t.
\end{cases}
\end{align}

It is easy to check that this system satisfies Assumptions \textbf{(A1)--(A2)}, and that the Lyapunov conditions hold with
$\mathcal U(x,y)=\frac{1}{12}(x^2-3)^2+\frac12 y^2.$
When \(\varepsilon=\mu=0\), the unperturbed deterministic system has two stable equilibria $z_a=(-\sqrt 3,0)^{\top},\; z_b=(\sqrt 3,0)^{\top},$ and one saddle point $z_u=(0,0)^{\top}$.

\textit{Numerical representation of the heteroclinic orbit for $\varepsilon=0$.}

Unlike Example A, the uphill heteroclinic orbit of this system has no explicit expression in the two-dimensional configuration space. Therefore, we compute it numerically in the phase space. By section~\ref{sec3}, the heteroclinic orbit of the original system can be characterized by the corresponding Hamiltonian system. We first solve the Hamiltonian boundary value problem using MATLAB's \texttt{bvp4c}, and then project the obtained orbit onto the $xOy$-plane.

We now add the conjugate momentum variables
$(p_x,p_y)$. Then the associated Hamiltonian  system is
\begin{equation}\label{h22}
\begin{cases}
\dot x=x-\dfrac13x^3-y,\\[6pt]
\dot y=-y+p_y,\\[6pt]
\dot p_x=(x^2-1)p_x,\\[6pt]
\dot p_y=p_x+p_y,
\end{cases}
\end{equation} 
where the ideal infinite-time boundary conditions are
$
(x,y,p_x,p_y)(-\infty)
=
(-\sqrt3,0,0,0),$
and
$(x,y,p_x,p_y)(+\infty)=(0,0,0,0).$

In the numerical computation, the infinite-time boundary conditions are
approximated on a large finite interval $[-T,T]$ by imposing boundary
conditions on the corresponding unstable eigenspace at $\Gamma_a$ and
stable eigenspace at $\Gamma_u$. Its projection onto the $(x,y)$-plane gives the uphill heteroclinic of the MLP.
The downhill segment from $z_u$ to $z_b$ is obtained from the deterministic system
$\dot x=x-\frac13 x^3-y,\;\dot y=-y$. Figure~\ref{fig:sample_hetercolinic} shows that the sample paths tend to
concentrate near the computed MLP. 
\begin{figure}[htbp]
    \centering
\includegraphics[width=0.6\linewidth]{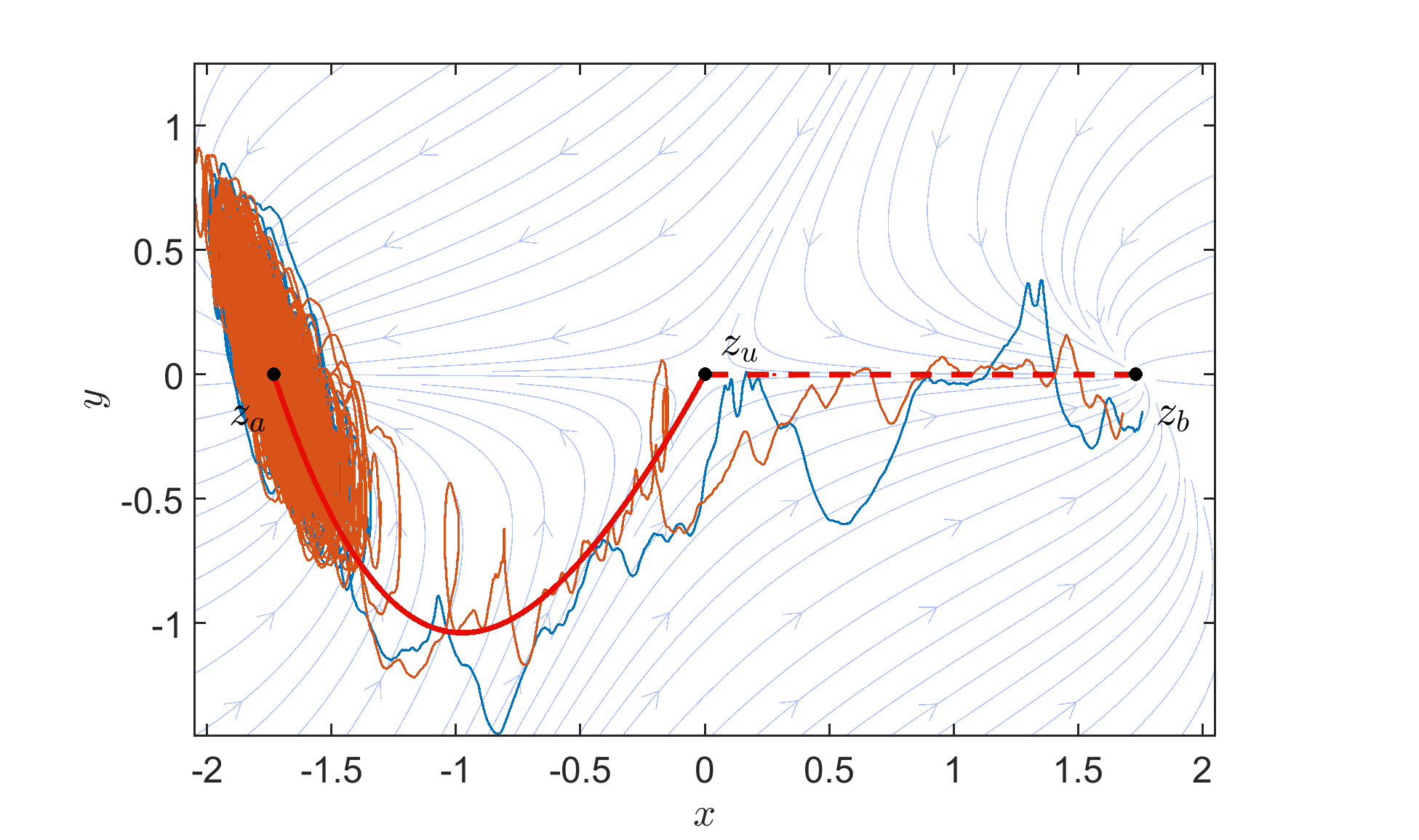}
\vspace{-0.2cm}
    \caption{(Color online) \textit{Phase portrait, heteroclinic orbits, and sample paths of the stochastic system without periodic forcing.} The thin light-blue streamlines depict the phase portrait of the deterministic system. The solid black curve is the MLP from $z_a$ to $z_u$, and the dashed red curve is the MLP from $z_u$ to $z_b$. The colored curves show representative sample paths for $\varepsilon=0$ and $\mu=0.4$; among 30 simulated trajectories, only two successful transitions from $z_a$ to $z_b$ via $z_u$ are displayed.}
\label{fig:sample_hetercolinic}
\end{figure}

Alternatively, the MLP in this case can be characterized through the quasi-potential $V$~\cite{liu2022}, which governs the rare transition dynamics and is defined as the minimal Freidlin--Wentzell action required to reach a point $z=(x,y)$ from a reference attractor:
\begin{equation}
V(z) = \inf_{\substack{\phi \in \mathscr{A} \\ \phi(-\infty) = z_a, \ \phi(+\infty) = z}} S^0_{\mathrm{FW}}[\phi].
\end{equation}
Rather than directly minimizing the action functional or solving the associated Hamiltonion equation, we compute $V$ numerically via the stationary Fokker--Planck equation. According to the large deviation principle, the stationary density $\rho_\mu$ satisfies the exponential asymptotic
$$
\rho_\mu(z) \sim \exp\!\left(-\frac{V(z)}{\mu}\right), \qquad \mu \to 0,
$$
which directly implies $V(z) = -\mu \log \rho_\mu(z) + O(\mu)$. For the present two-dimensional system, the stationary Fokker--Planck equation \cite{Duan2015} reads
\begin{equation}\label{FP-eq}
\mathcal{A}^* \rho_\mu = 0,
\end{equation}
where the Fokker--Planck operator $\mathcal{A}^*$ acting on $\rho_\mu$ is explicitly given by
$$
\mathcal{A}^* \rho_\mu
= -\frac{\partial}{\partial x}\Big[\big(x - \tfrac{1}{3}x^3 - y\big)\rho_\mu\Big]
 -\frac{\partial}{\partial y}\big[-y\,\rho_\mu\big]
 + \frac{\mu}{2}\frac{\partial^2 \rho_\mu}{\partial y^2}.
$$
We numerically solve this equation by discretizing the state space on a uniform grid and approximating the operator using an upwind Markov-chain scheme. The resulting stationary density $\rho_\mu$ is obtained from the null eigenvector of the discretized operator, and is then smoothed to suppress numerical artifacts. The quasi-potential is finally approximated by
\begin{equation*}
V(x,y) \approx -\mu \log \rho_\mu(x,y),
\end{equation*}
normalized by subtracting its global minimum. This quasi-potential landscape provides a numerical consistency check
for the computed MLP; see~figure~\ref{quasi-potential}.

\begin{figure}[htbp]
\begin{minipage}[t]{0.5\linewidth}
\centering
\includegraphics[height=6cm]{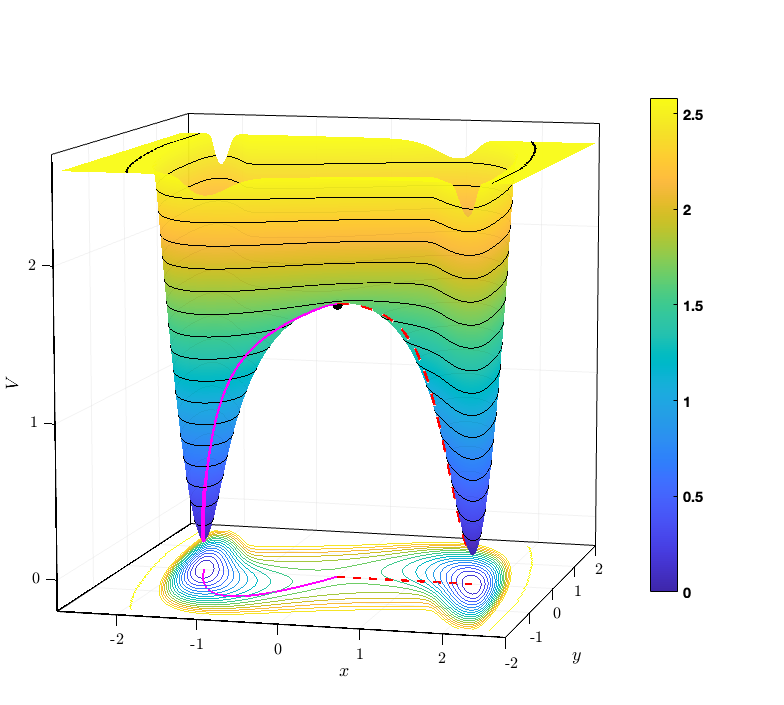}
\centerline{$(a)$}
\end{minipage}%
\begin{minipage}[t]{0.5\linewidth}
\centering
\includegraphics[height=5.5cm]{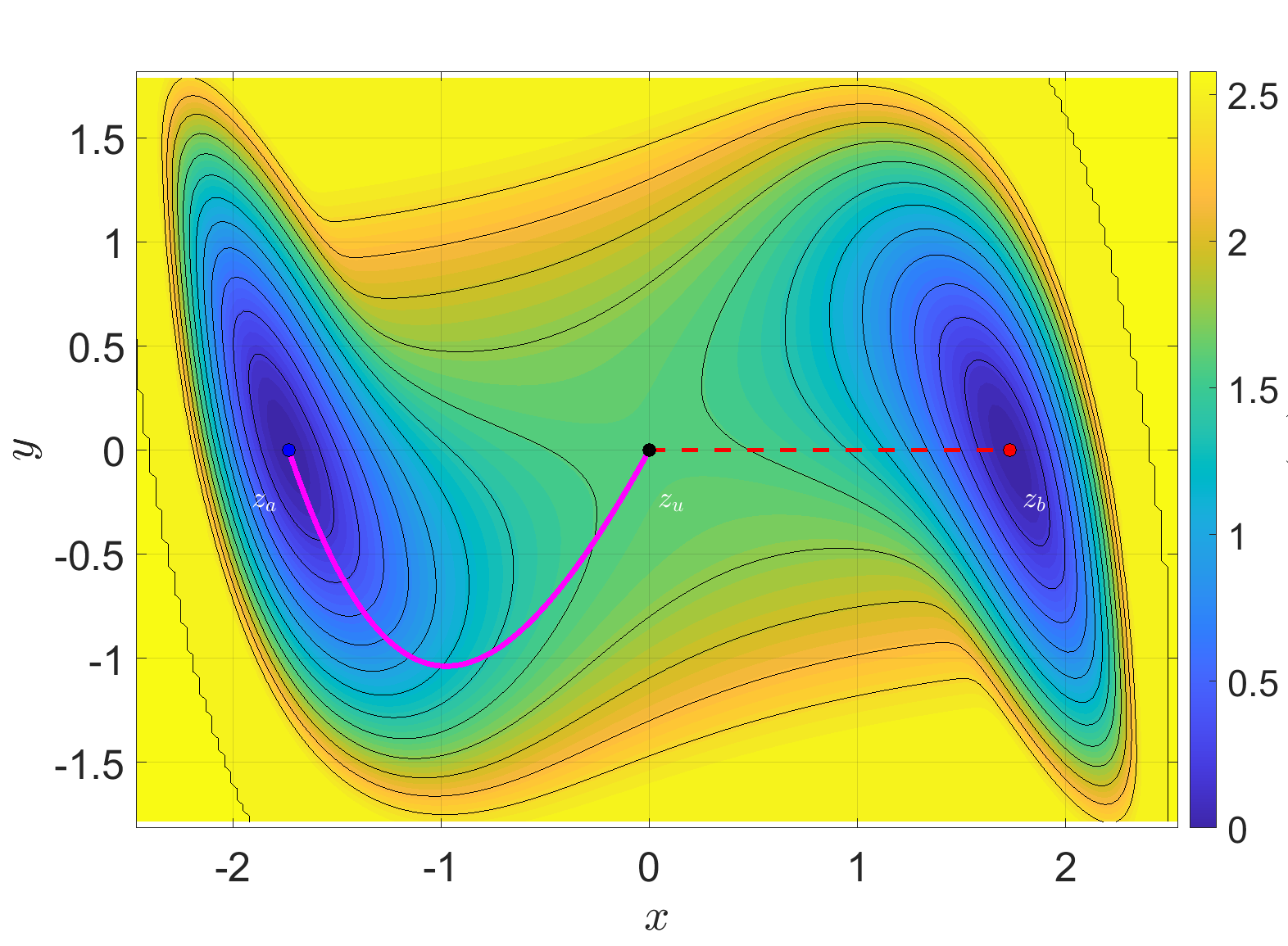}
\centerline{$(b)$}
\end{minipage}
\caption{(Color online) \textit{Quasi-potential landscape and MLP with $\varepsilon=0$ and $\mu=0.08$.} Filled contours show $V(x,y)$ computed from the stationary Fokker--Planck equation \eqref{FP-eq}; black lines are level sets. The magenta curve is the MLP from $z_a$ to $z_u$, obtained by action minimization. The red dashed curve is the deterministic unstable manifold from $z_u$ to $z_b$, which lies on the $x$-axis since $y(0)=0$ and $\dot y=-y$. Dots mark the equilibria $z_a$, $z_u$, and $z_b$.}
\label{quasi-potential}
\end{figure}

\text{\textit{Numerical validation of metastable transitions under periodic forcing.}}

We further consider the persistence of MLP under small perturbations. From above results, we obtain the existence of the unperturbed uphill orbit
$\Phi_{\mathrm{up}}^0(t)$ numerically. For this example, the regularity assumptions are straightforward.
The remaining key conditions in Assumption \textbf{(A3)} are the transversality
of the unperturbed heteroclinic connection and the nondegeneracy of the corresponding Melnikov function. Note that in phase space, the function $g(t,x,y)=(0,h(x,y)cos(\omega t+\vartheta))^{\top}$ becomes \[
\mathcal G(t,\Phi)
=
\begin{pmatrix}
0\\
h(x,y)\\
-p_y h_x(x,y)\\
-p_y h_y(x,y)
\end{pmatrix}
\cos(\omega t+\vartheta):=\mathcal G_0(\Phi)\cos(\omega t+\vartheta).
\] Assume $$T_{\Phi_{\mathrm{up}}^0(t)}W^u(\Gamma_a)
\cap
T_{\Phi_{\mathrm{up}}^0(t)}W^s(\Gamma_u)=
\operatorname{span}\{\dot{\Phi}_{\mathrm{up}}^0(t)\},$$ 
and
$$ \int_{-\infty}^{+\infty}
K(s)e^{i\omega s}\,ds
\neq 0,$$
where $\Gamma_a=(-\sqrt3,0,0,0)^{\top},\;\Gamma_u=(0,0,0,0)^{\top}$
and 
$K(s)
=
\nabla_{\Phi}H_0\bigl(\Phi_{\mathrm{up}}^0(s)\bigr)^{\top}
\mathcal G_0\bigl(\Phi_{\mathrm{up}}^0(s)\bigr).$
 Then, under these assumptions,  \eqref{case1} admits an MLP that persists under sufficiently small perturbations. 

\begin{figure}[htbp]
\begin{minipage}[t]{0.5\linewidth}
\centering
\includegraphics[height=4.2cm]{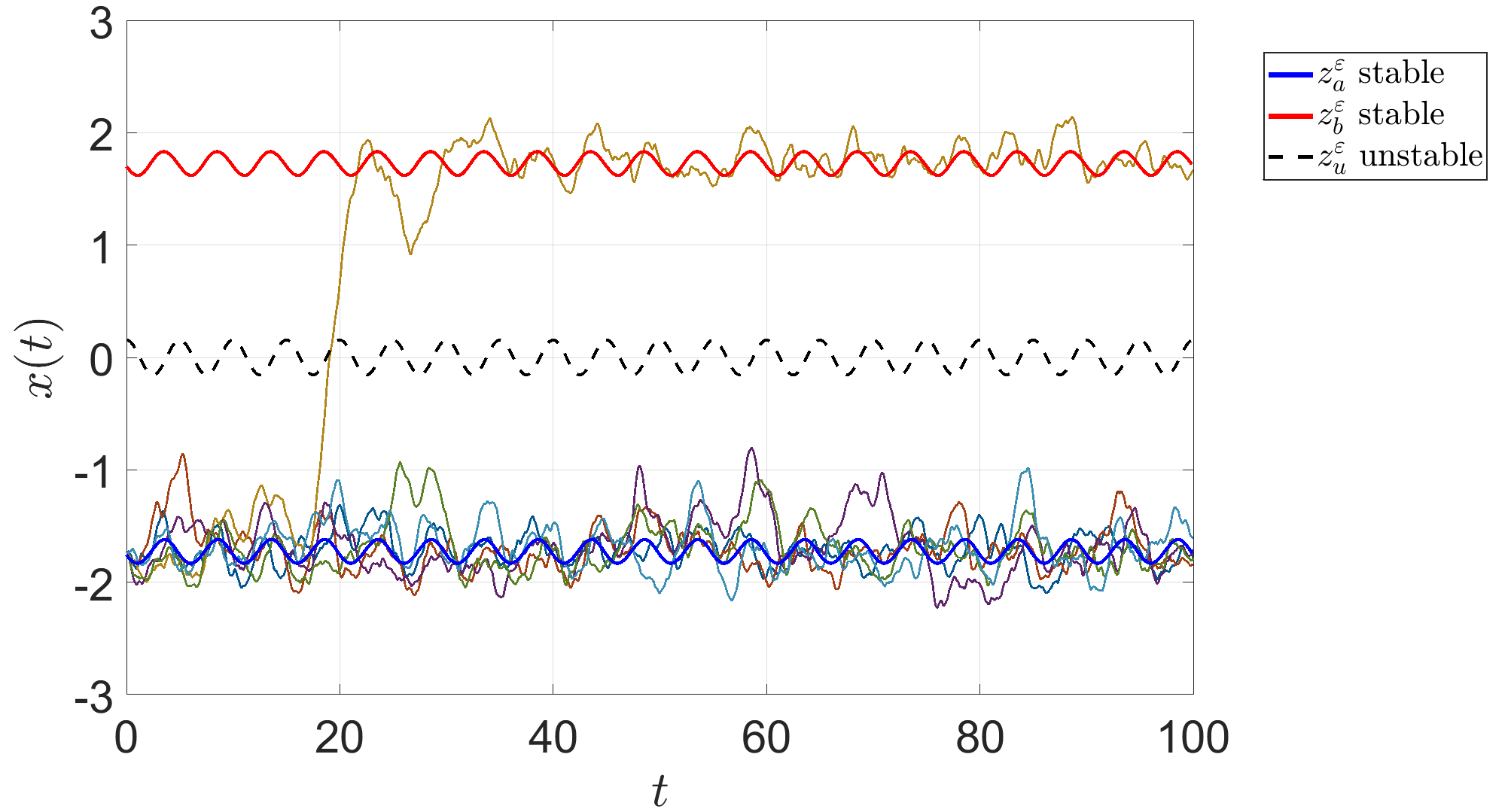}
\centerline{$(a)$}
\end{minipage}%
\begin{minipage}[t]{0.5\linewidth}
\centering
\includegraphics[height=4.2cm]{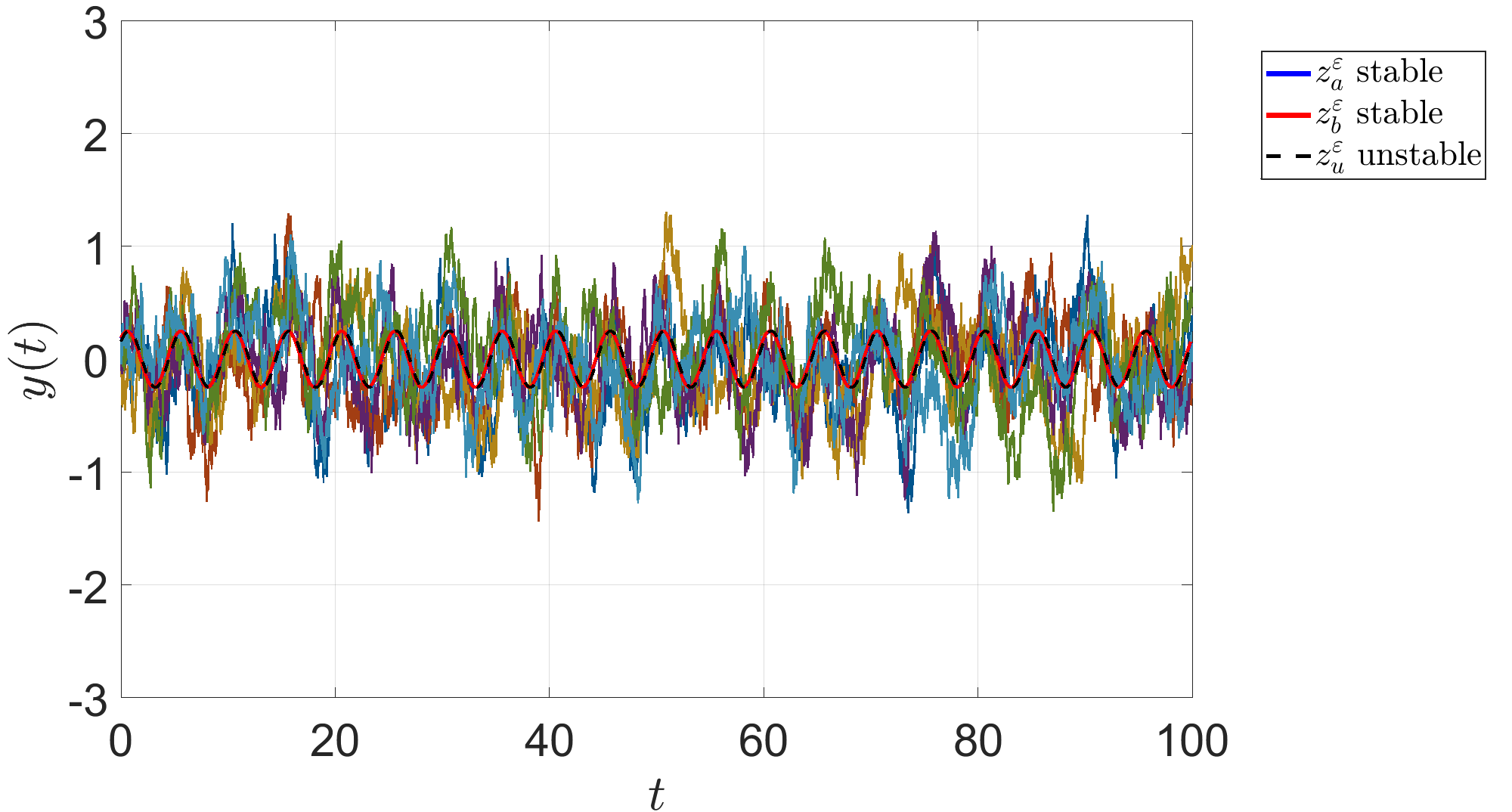}
\centerline{$(b)$}
\end{minipage}
\caption{(Color online) \textit{Noise-induced transitions between metastable periodic orbits.}
(a) Projection onto the $x$-coordinate. 
(b) Projection onto the $y$-coordinate. 
The solid blue and red curves denote the stable periodic orbits $\gamma_a^\varepsilon(t)$ and $\gamma_b^\varepsilon(t)$, respectively, while the black dashed curve represents the unstable periodic orbit $\gamma_u^\varepsilon(t)$. The colored paths show sample trajectories (6 in total) under stochastic forcing, exhibiting noise-induced transitions between the two stable periodic orbits. 
Here: $\mu = 0.25$, $\varepsilon = 0.4$, $\omega = 2\pi/5$, $\vartheta = 0$.}\label{fig:two-figures}
\end{figure}

Figure~\ref{fig:two-figures} shows the projections of the noise-induced transitions between stable periodic solutions onto the $x$- and $y$-coordinates for $\mu=0.25,\;\varepsilon=0.4,\;\omega=2\pi/5,\;\vartheta=0$. To better display the periodic structure, we represent the dynamics on a cylinder in~figure~\ref{11111}.

\begin{figure}[htbp]
\begin{minipage}[t]{0.5\linewidth}
\centering
\includegraphics[height=4.2cm]{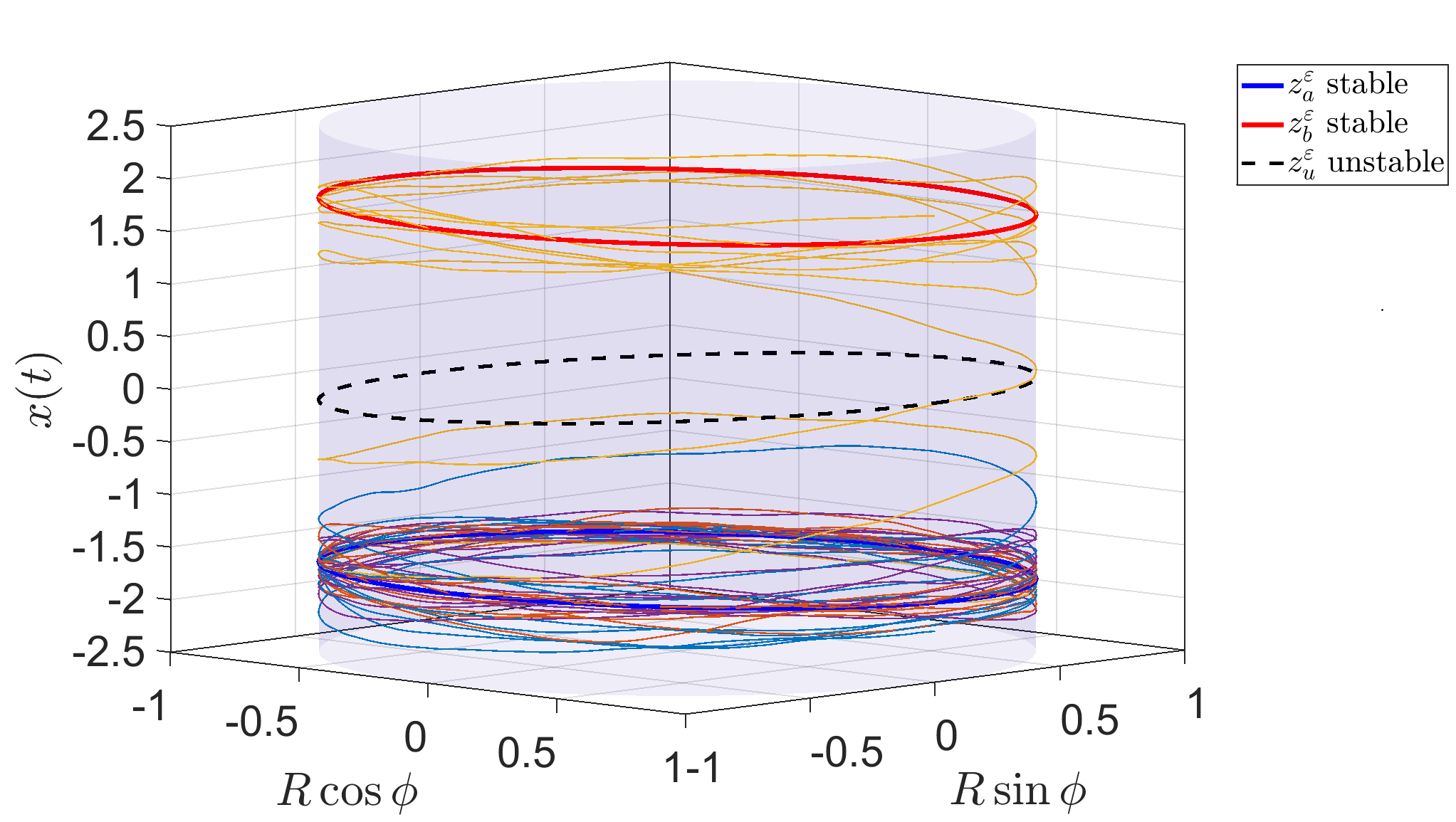}
\centerline{$(a)$}
\end{minipage}%
\begin{minipage}[t]{0.5\linewidth}
\centering
\includegraphics[height=4.2cm]{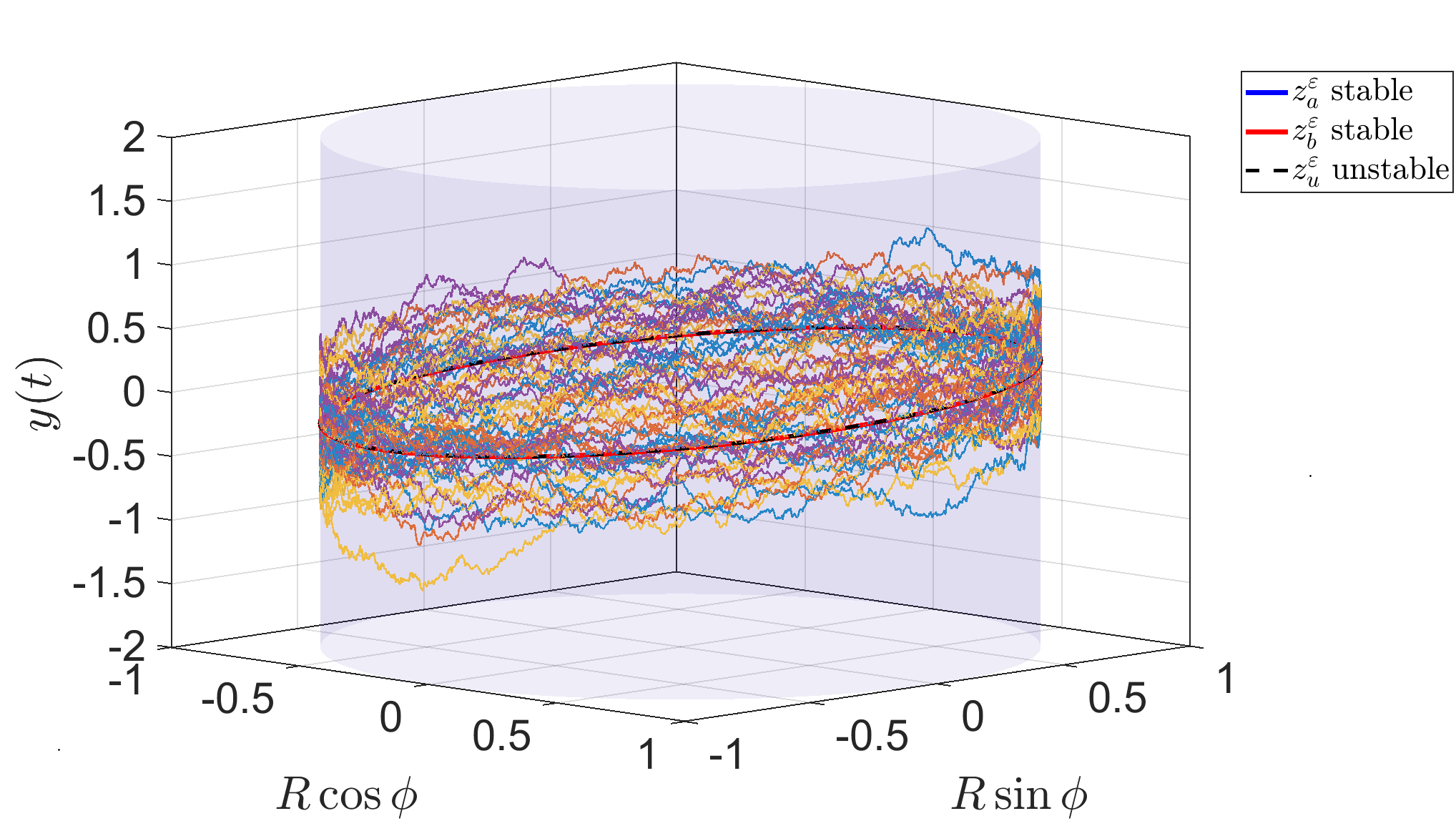}
\centerline{$(b)$}
\end{minipage}
\caption{(Color online) \textit{Cylindrical projections of noise-induced transitions between metastable periodic orbits.} Same parameter setting as in in Figure~\ref{fig:two-figures}. Solid blue and red rings in (a) show the lower and upper stable periodic orbits of $x(t)$ for \eqref{case1} with $\mu=0$, with colored curves (8 in total) indicating noise-induced transition paths between them. In (b) the stable and unstable periodic orbits overlap, and the colored curves (8 in total) illustrate its noise-driven dynamics under direct Brownian forcing.}\label{11111}
\end{figure}

Unlike Example A, the present FitzHugh--Nagumo type system does not admit an explicit expression for the uphill MLP or for the action correction $\delta S_e$. Hence the transition rate formula cannot be compared with an explicit analytical benchmark. Moreover, the noise is degenerate and acts only on the $y$-component, so direct Monte Carlo estimation of rare transition rates would require a very large number of sample paths and long integration times. Therefore, instead of numerically estimating the transition rate, we focus on computing the unperturbed Hamiltonian heteroclinic orbit by a boundary value method and numerically validating metastable transitions under periodic forcing.

\section{Conclusion}
\label{sec7}
In this paper, we have established the persistence of the MLPs under small time-dependent forcing for a general class of higher-dimensional degenerate stochastic systems. Under suitable hyperbolicity and transversality assumptions, we proved the persistence of the uphill heteroclinic connection by a geometric Melnikov method, which does not require the perturbation to be time-periodic.

For periodic perturbations, we derived a closed-form explicit expression that characterizes how a small, generic, nonlinear periodic forcing affects the metastable transition rate. Moreover, the rate of the metastable transition is also approximated in terms of the Melnikov function. The examples illustrate how the periodic forcing can either enhance or suppress metastable transitions depending on the forcing phase and frequency. These results provide a geometric mechanism for understanding noise-induced transitions in degenerate stochastic systems with weak time-dependent perturbations.

\bibliographystyle{alpha}
\bibliography{Wei-refs}
\end{document}